\def\iddots{\mathinner{\mkern1mu\raise\p@
\vbox{\kern7\p@\hbox{.}}\mkern2mu
\raise4\p@\hbox{.}\mkern2mu\raise7\p@\hbox{.}\mkern1mu}}
\newcommand{\field}[1]{\mathbb{#1}}
\newcommand{\cA}{\mathcal A}
\newcommand{\cK}{\mathcal K}
\newcommand{\cU}{\mathcal U}
\newcommand{\cV}{\mathcal V}
\newcommand{\NN}{\field{N}}
\newcommand{\RR}{\field{R}}
\DeclareMathOperator{\rank}{rank}
\DeclareMathOperator{\rankpsd}{\rank_{psd}}
\DeclareMathOperator{\conv}{conv}
\DeclareMathOperator{\diag}{diag}
\DeclareMathOperator{\xc}{xc} 
\DeclareMathOperator{\1}{\mathbf{1}}
\DeclareMathOperator{\COR}{COR} 
\DeclareMathOperator{\UDISJ}{UDISJ} 
\DeclareMathOperator{\val}{val} 
\let\Im\relax 
\DeclareMathOperator{\Im}{Im}
\DeclareMathOperator{\Ker}{Ker}
\newcommand{\bM}{{\mathbf M}}
\newcommand{\ra}{\texttt{a}}
\newcommand{\rb}{\texttt{b}}
\newcommand{\rc}{\texttt{c}}
\newcommand{\rd}{\texttt{d}}
\renewcommand{\S}{{\bf S}}
\renewcommand{\L}{{\bf L}} 
\newcommand{\C}{{\mathcal C}}
\newcommand{\concat}{\cdot} 
\theoremstyle{plain}
\newtheorem{prop}{Proposition}
\newtheorem{lem}{Lemma}
\newtheorem{thm}{Theorem}
\newtheorem{cor}{Corollary}
\theoremstyle{definition}
\newtheorem{defn}{Definition}
\theoremstyle{remark}
\newtheorem{example}{Example}
\newtheorem*{rem}{Remark}
\title{
\vspace{-1.1cm}
Exponential lower bounds on fixed-size psd rank\\ and semidefinite extension complexity}
\author{Hamza Fawzi \and Pablo A. Parrilo \thanks{The authors are with
    the Laboratory for Information and Decision Systems, Department of
    Electrical Engineering and Computer Science, Massachusetts
    Institute of Technology, Cambridge, MA 02139. Email:
    \texttt{\{hfawzi,parrilo\}@mit.edu}.}}
\renewcommand\footnotemark{}
\date{November 11, 2013}
\begin{document}
\maketitle
\vspace{-0.5cm}
\begin{abstract}
There has been a lot of interest recently in proving lower bounds on the size of linear programs needed to represent a given polytope $P$. In a breakthrough paper Fiorini et al. \cite{fiorini2012linear} showed that any linear programming formulation of maximum-cut must have exponential size. A natural question to ask is whether one can prove such strong lower bounds for \emph{semidefinite programming} formulations. In this paper we take a step towards this goal and we prove strong lower bounds for a certain class of SDP formulations, namely SDPs over the Cartesian product cone $\S^d_+\times \dots \times \S^d_+ = (\S^d_+)^r$ when $d$ is constant ($\S^d_+$ is the cone of $d\times d$ positive semidefinite matrices). In practice this corresponds to semidefinite programs with a block-diagonal structure and where blocks have size $d$. We show that any such extended formulation of the cut polytope must have exponential size when the size of the block $d$ is a fixed constant.
The result of Fiorini et al. for LP formulations is obtained as a special case when $d=1$. For blocks of size $d=2$ the result rules out any small formulations using \emph{second-order cone programming}. Our study of SDP lifts over Cartesian product $(\S^d_+)^r$ is motivated mainly from practical considerations where it is well known that such SDPs can be solved more efficiently than general SDPs. The proof of our lower bound relies on new results about the sparsity pattern of certain matrices with small psd rank, combined with an induction argument inspired from the recent paper by Kaibel and Weltge \cite{kaibel2013short} on the LP extension complexity of the correlation polytope.

\end{abstract}


\section{Introduction}

\subsection{Preliminaries}

Linear programming and semidefinite programming play a crucial role in the design of algorithms \cite{williamson2011design}. A line of work initiated by Yannakakis in \cite{yannakakis1991expressing} and which has received a lot of attention recently studies the limits of linear programming and their expressive power. The main object of study there is the notion of \emph{extended formulation} of a polytope $P$.
Given a polytope $P \subset \RR^n$, an \emph{extended formulation} of $P$ is a representation of $P$ as the projection of a higher-dimensional polytope $Q \subset \RR^m$ with $m\geq n$, i.e., $P = \pi(Q)$ where $\pi$ is a linear projection map $\pi:\RR^m\rightarrow \RR^n$. The size of an extended formulation is the number of facets of the polytope $Q$, i.e., the number of linear inequalities needed to describe $Q$. The \emph{extension complexity} of $P$, denoted $\xc(P)$, is defined as the size of the smallest extended formulation of $P$. There are many examples of polytopes $P$ where $\xc(P)$ is much smaller than the number of facets of $P$. For example the \emph{cross-polytope} (i.e., the unit $\ell_1$-ball) in $\RR^n$ has $2^n$ facets but has a simple extended formulation of size $2n$. Another example is the \emph{permutahedron} defined as the convex hull of all the permutations of $(1,2,\dots,n)$. Even though the permutahedron has $2^n-2$ facets one can show that its extension complexity is $\Theta(n\log n)$ (cf. \cite{goemans2009smallest}).

In a breakthrough paper \cite{fiorini2012linear}, Fiorini et al. proved that there is no polynomial-sized extended formulations of the \emph{cut polytope} of the complete graph $K_n$. The cut polytope is the natural polytope of interest for the maximum-cut problem and is defined as the convex hull of the incidence vectors of cuts in $K_n$ \cite{deza1997geometry}. Instead of working directly with the cut polytope, the authors \cite{fiorini2012linear} rather worked with a closely related polytope called the \emph{correlation polytope} $\COR(n)$ and which is defined as the convex hull of the outer products $bb^T$ for $b \in \{0,1\}^n$:
\[ \COR(n) = \conv\left( bb^T \; : \; b \in \{0,1\}^n \right). \]
One can show that the correlation polytope is linearly isomorphic to the cut polytope of the complete graph $K_{n+1}$ on $n+1$ vertices, see e.g., \cite{deza1997geometry}.
Fiorini et al. \cite{fiorini2012linear} proved that the extension complexity of the correlation polytope (and thus of the cut polytope) is exponentially large in $n$.
Their proof relied on a key lemma of Razborov \cite{razborov1992distributional} in the context of communication complexity. Subsequently, different proofs of this exponential lower bound were then given in \cite{braverman2013information} and in \cite{braun2013common} and in particular the recent paper by Kaibel and Weltge \cite{kaibel2013short} gives a short proof of this fact.


An important problem that remains open is to obtain strong lower bounds on the size of \emph{semidefinite programming} formulations.
Semidefinite programming has played a crucial role in the design of approximations algorithms and has been applied to many hard combinatorial problems, like e.g., max-cut, graph-coloring, etc. \cite{goemans1995improved,karger1998approximate,arora2009expander}. It has also been shown that, under the Unique Games Conjecture, semidefinite programs allow to get the best possible approximation ratio for a wide class of problems \cite{raghavendra2008optimal}.
A natural question to consider following the result of Fiorini et al. \cite{fiorini2012linear} is to prove strong lower bounds for SDP formulations of the cut polytope.
In this paper we take a step towards this goal and we prove strong lower bounds for a certain class of SDP extended formulations. We consider semidefinite programs over the Cartesian product cone $\S^d_+ \times \dots \times \S^d_+ = (\S^d_+)^r$ where $\S^d_+$ is the cone of $d\times d$ positive semidefinite matrices and where $d$ is a \emph{fixed constant}. Recall that the standard form of a semidefinite program is:
\begin{equation}
\label{eq:canonicalsdp}
\begin{array}{ll}
\text{minimize} & \langle C, X \rangle\\
\text{subject to} & A(X) = b\\
                  & X \in \S^N_+
\end{array}
\end{equation}
where $\S^N_+$ is the cone of $N\times N$ real symmetric matrices and $A$ is a linear map. In other words, the feasible set of a semidefinite program is the intersection of the cone $\S^N_+$ with an affine subspace $\{ X \in \S^N \; : \; A(X) = b \}$.
The canonical form \eqref{eq:canonicalsdp} of a semidefinite program is interesting from a theoretical point of view but it can sometimes hide structural information which can be crucial for solving the SDP in practice. In this work we are interested in semidefinite programs where the variable $X$ is constrained to be block-diagonal and where the diagonal blocks $X_1,X_2,\dots,X_r$ have fixed size $d$. Such semidefinite programs can be written as follows:
\begin{equation}
\label{eq:blockdiagonal-cartesiansdp}
\begin{array}{ll}
\text{minimize} & \langle C_1, X_1 \rangle + \dots + \langle C_k, X_r \rangle\\
\text{subject to} & A(X_1,\dots,X_r) = b\\
                  & (X_1,\dots,X_r) \in \underbrace{\S^{d}_+\times \dots \times \S^{d}_+}_{\text{(r copies)}}
\end{array}
\end{equation}
where $r$ is the number of blocks on the diagonal.
It is well known in numerical optimization that problems of the form \eqref{eq:blockdiagonal-cartesiansdp} can be solved a lot faster than general problems in the form \eqref{eq:canonicalsdp}. In other words, solving an SDP over the cone $\cK = \S^{d}_+ \times \dots \times \S^{d} = (\S^d_+)^r$ can be done more efficiently than solving a general SDP over the cone $\cK = \S^{N}_+$ where $N=dr$. Note that this phenomenon is proper to semidefinite programming and does not exist in linear programming since $\RR^{N}_+ = (\RR^{d}_+)^r$ when $N=dr$.

To prove lower bounds on LP extension complexity, the previously cited papers \cite{fiorini2012linear,braverman2013information,braun2013common} exploit the connection established by Yannakakis in \cite{yannakakis1991expressing} between \emph{extended formulations} and \emph{nonnegative rank}. This connection was generalized in \cite{gouveia2011lifts} to conic extended formulations and in particular semidefinite programming formulations. A key quantity identified in \cite{gouveia2011lifts} is the \emph{psd rank} and was shown to characterize the size of semidefinite programming formulations of polytopes. The \emph{psd rank} of a matrix has an interesting interpretation in quantum information theory in the context of correlation generation \cite{jain2013efficient}: Consider the problem of simulating a given conditional distribution $p(y|x)$ whereby the output $Y$ is obtained by measuring a quantum state $\omega_x$ through a POVM $\{F_y\}$. The \emph{psd rank} of the matrix $p(y|x)$ gives the smallest dimension of quantum states needed to simulate the conditional distribution $p(y|x)$. The block-diagonal requirement studied in this paper then has a natural interpretation in quantum information: it requires the states $\omega_x$ to be classical-quantum (also called \emph{cq-states}) where the dimension of the quantum part is constant.


\paragraph{Formal statement of result} We now formally state the main result of this paper. 
Given two integers $d$ and $r$, we say that a polytope $P$ has a $(\S^d_+)^r$-lift if $P$ is the projection of a feasible set of an SDP over $(\S^d_+)^r$, i.e., if
\[ P = \pi((\S^d_+)^r \cap L) \]
where $L$ is an affine subspace and $\pi$ a linear projection map. 

We show that for \emph{any constant} $d$, any $(\S^{d}_+)^r$-lift of the correlation polytope $\COR(n)$ must have exponential size, i.e., $r$ must be exponential in $n$. This is stated in the following theorem:
\begin{thm}[Main]
\label{thm:main}
Let $d \geq 1$ be a fixed integer. For $n \geq d$, if $\COR(n)$ has a $(\S^{d}_+)^r$-lift for some $r \in \NN$, then necessarily 
\[ r \geq \kappa(d) \cdot c(d)^n, \]
where $c(d) = (1-1/3^{d})^{-1/{d}} > 1$ is a constant greater than 1 and $\kappa(d) = (3^{d}-1)^{-(1-1/d)}$.

For the special case $d=2$, the constants $\kappa(2)$ and $c(2)$ can be taken to be:
\[ \kappa(2) = \frac{1}{\sqrt{7}} \qquad c(2) = \sqrt{\frac{9}{7}} \approx 1.13. \]
\end{thm}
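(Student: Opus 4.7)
My plan has three components: a Yannakakis-type reduction, a new combinatorial lemma about the sparsity pattern of matrices with small psd rank, and a Kaibel--Weltge-style induction.

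\textbf{Step 1: Reduction to a matrix factorization problem.} First I would invoke the conic generalization of Yannakakis' theorem due to Gouveia, Parrilo, and Thomas: the smallest $r$ for which $\COR(n)$ admits a $(\S^d_+)^r$-lift equals the smallest $r$ for which the slack matrix $M$ of $\COR(n)$ admits a factorization
\[ M(a,b) = \sum_{k=1}^{r} \langle A_{a,k}, B_{b,k} \rangle, \qquad A_{a,k},\, B_{b,k}\in \S^d_+. \]
By the standard embedding of the unique-disjointness pattern into the slack matrix of $\COR(n)$, it suffices to prove the corresponding lower bound for the factorization rank over $(\S^d_+)^r$ of the UDISJ matrix, whose entries vanish exactly on pairs $(a,b)$ with $|a \cap b|=1$. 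The rest of the argument works with this combinatorial matrix.

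\textbf{Step 2: Sparsity lemma for small psd rank.} The central new ingredient I would develop is a structural lemma on matrices $N(a,b)=\langle A_a, B_b\rangle$ with $A_a,B_b \in \S^d_+$: the support/zero pattern of such a ``rank-$1$ over $\S^d_+$'' matrix is constrained by orthogonality of the subspaces $\Im(A_a),\Im(B_b)\subseteq\RR^d$, since $\langle A_a,B_b\rangle=0 \Leftrightarrow \Im(A_a)\perp\Im(B_b)$. The goal is to show that the combinatorial ``type'' structure of zeros is controlled by the $3^d$ possibilities for how a pair of subspaces can intersect orthogonally (a coordinatewise $\{0,1,2\}$ label along a common basis). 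I expect this to yield a statement of the form: any pair of row/column sets $R\times C$ on which $N$ vanishes identically must have $R$ and $C$ ``orthogonally typed'', with at most $3^d-1$ nontrivial types. Summing over the $r$ factors gives a quantitative sparsity bound for matrices with $(\S^d_+)^r$-rank at most $r$.

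\textbf{Step 3: Induction on $n$.} Following Kaibel--Weltge, I would set up an invariant $\phi(n)$ (plausibly $r^d$ for the smallest factorization rank of the $n$-variable UDISJ matrix) and run an induction where each step peels off $d$ coordinates of $[n]$ rather than one. Restricting the UDISJ matrix to rows and columns with prescribed values on these $d$ coordinates, Step~2 forces at least a fraction $1/3^d$ of the ``factor budget'' to be consumed by the constraints on that $d$-block, giving a recursion
\[ \phi(n) \;\geq\; \frac{3^d}{3^d-1}\,\phi(n-d). \]
Unrolling over $\lfloor n/d\rfloor$ steps and absorbing the base case into $\kappa(d)$ recovers $r \geq \kappa(d)\,c(d)^n$ with $c(d)=(1-1/3^d)^{-1/d}$, matching the claim. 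The constant $c(2)=\sqrt{9/7}$ follows by plugging in $d=2$.

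\textbf{Main obstacle.} The hardest step is clearly Step 2: formulating the correct sparsity lemma over $\S^d_+$ and extracting exactly the constant $3^d$. For $d=1$ the lemma is trivial (a zero of $\alpha_a\beta_b$ forces one factor to vanish, giving the Kaibel--Weltge combinatorics with constant $3$), but for $d\geq 2$ one must handle the geometry of orthogonal subspaces of $\RR^d$ and show that the ``blow-up'' over a $d$-dimensional block is exactly by a factor $3^d/(3^d-1)$ per $d$-step of induction rather than something weaker. A secondary subtlety is choosing an induction invariant (most likely $r^d$, explaining the $1/d$ in the exponent of $c(d)$) that makes the recursion close cleanly with the sparsity bound of Step~2.
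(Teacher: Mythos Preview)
Your three-step architecture matches the paper's: the reduction in Step~1 is exactly right, and Step~3 correctly anticipates an induction that peels off $d$ coordinates at a time, yielding a recursion equivalent to $\rho_{\S^d_+}(n)\le (3^d-1)\,\rho_{\S^d_+}(n-d)$. The serious gap is Step~2, where your description is both vague and pointed in the wrong direction.

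Concretely: the sparsity lemma the paper actually proves (Lemma~\ref{lem:sparsity}) is not a statement about ``orthogonal types along a common basis''---for arbitrary $A_a,B_b\in\S^d_+$ there is no common basis, and the $3^d$ does not arise from coordinatewise $\{0,1,2\}$ labels. The correct lemma is much sharper and more specific: if $M\in\RR^{2^d\times 2^d}_+$ has $\rankpsd M\le d$ and $M_{a,b}=0$ whenever $a^Tb=1$, then $M$ has a zero somewhere on the \emph{antidiagonal}, i.e.\ $M_{\alpha,\bar\alpha}=0$ for some $\alpha\in\{0,1\}^d$. This is proved by a short dimension-counting argument on the images $\Im V_{e_i}\subseteq\RR^d$. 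The $3^d-1$ then enters not via a type-count but via a separate combinatorial device the paper introduces, the \emph{$k$-uniform-covering}: a collection of $k$ rectangles supported on disjoint pairs such that every matrix in the atom set admits an injective assignment of its nonzero disjoint entries to rectangles containing them. One shows by a second induction on $d$ that the set of $2^d\times 2^d$ matrices with an antidiagonal zero has a $(3^d-1)$-uniform-covering, and it is this covering that drives the Kaibel--Weltge-style recursion on $n$. Your proposal is missing both the antidiagonal lemma and the uniform-covering mechanism; without them there is no clear way to get from ``orthogonality constrains zeros'' to the precise constant $3^d-1$.

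Two smaller points. First, your guess that the invariant should be $r^d$ is off: the paper works directly with $\val(M)$ (number of nonzero disjoint entries), and the $1/d$ in the exponent comes simply from stepping $d$ coordinates at a time in the recursion, not from raising $r$ to a power. Second, the refined constants $\kappa(2)=1/\sqrt{7}$ and $c(2)=\sqrt{9/7}$ do \emph{not} fall out of the general argument (which gives $1/\sqrt{8}$ and $\sqrt{9/8}$); they require a separate, finer case analysis of the six possible sparsity patterns in $\cA_{\S^2_+}(2)$ and an explicit $7$-uniform-covering.
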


As mentioned earlier, the polytope $\COR(n)$ is linearly isomorphic to the cut polytope. Fiorini et al. \cite[Lemma 11]{fiorini2012linear} also showed that the correlation polytope projects onto a face of the TSP polytope of size $O(n^2)$.
 The result above thus rules out any polynomial-size formulation of the cut polytope and the TSP polytope using block-diagonal semidefinite programs. 

Theorem \ref{thm:main} generalizes the existing lower bounds on LP extended formulations of the correlation polytope. Indeed LP extended formulations are captured by the case $d=1$ in the theorem above. The case $d=2$ is also important since it amounts to asking what is the smallest size of a \emph{second-order cone program} needed to represent $\COR(n)$. Recall that the second-order cone $\L^k$ in $\RR^{k+1}$ is defined by:
\[ \L^k = \{(t,x) \in \RR \times \RR^k \; : \; \|x\|_2 \leq t \}. \]
The cone $\S^2_+$ is affinely isomorphic to $\L^2$ since we have:
\[  t \geq \sqrt{x^2+y^2}  \quad \Leftrightarrow \quad \begin{bmatrix} t+x & y\\ y & t-x \end{bmatrix} \succeq 0. \]
Also it is known that for second-order cone programming, it is sufficient to work with Cartesian products of $\L^2$. Indeed, any constraint on the second-order cone $\L^k$ with $k\geq 2$ can be represented with $k-1$ constraints $\L^2$ and $k-2$ additional variables. For example, we have:
\[ 
(t,x_1,x_2,x_3,x_4) \in \L^4
\quad
\Leftrightarrow
\quad
\exists u, v \in \RR \text{ s.t. } 
\begin{cases}
(t,u,v) \in \L^2 \\
(u,x_1,x_2) \in \L^2\\
(v,x_3,x_4) \in \L^2.
\end{cases}
\]
For the general procedure to convert a constraint on $\L^k$ to a set of constraints on $\L^2$, we refer the reader to \cite[Section 2]{ben2001polyhedral}.

\paragraph{Strategy of proof} Our proof of Theorem \ref{thm:main} goes by analyzing the so-called \emph{unique-disjointness matrix} $\UDISJ(n)$ (cf. Definition \ref{def:udisj}) studied by Fiorini et al. in \cite{fiorini2012linear} to prove lower bounds on the LP extension complexity of the correlation polytope. In \cite{fiorini2012linear} and the subsequent papers \cite{braverman2013information,braun2013common,kaibel2013short} it was shown that the nonnegative rank of $\UDISJ(n)$ is exponentially large in $n$. To prove our lower bound of Theorem \ref{thm:main} concerning $(\S^d_+)^r$-lifts, we need to deal with a different notion of rank that can be regarded as an intermediate between the nonnegative rank and the psd rank: instead of requiring each term in the factorization of the matrix to be nonnegative and rank-one (as in the definition of the nonnegative rank), we ask instead that each term be nonnegative and has \emph{psd-rank} $\leq d$ (cf. Definition \ref{def:psdrank} for the definition of psd rank). To prove our theorem, we show that any such decomposition of the unique-disjointness matrix requires an exponential number of terms, when $d$ is fixed. Our approach proceeds by analyzing the set of matrices of psd rank $\leq  d$ which can arise in a nonnegative decomposition of $\UDISJ(n)$. This set of matrices can be complicated because of the psd rank constraint and little is currently known about how to obtain good bounds on the psd rank \cite{lee2012support}. Our proof relies on new results about the sparsity pattern of certain matrices with small psd rank, which we combine with an induction argument inspired by the recent paper of Kaibel and Weltge  \cite{kaibel2013short}. We identify a new property called the \emph{uniform covering} property which is essential to our induction argument and which generalizes a simple observation about the sparsity pattern of certain $2\times 2$ matrices of rank one. The proof of our main theorem is composed of two parts: in the first part we show how
to use the \emph{uniform covering} property to obtain strong lower bounds; and in the second part we construct good \emph{uniform coverings} for the class of matrices of interest.

\paragraph{Organization} The rest of the paper is devoted to the proof of Theorem \ref{thm:main}. We start by reviewing the main definitions and theorems concerning conic extended formulations and the slack matrix of a polytope \cite{gouveia2011lifts}. After briefly revisiting the induction argument of Kaibel and Weltge \cite{kaibel2013short}, we introduce the notion of \emph{uniform covering} and we show how uniform coverings can be used to prove lower bounds using induction. In Sections \ref{sec:uniformcovering-d=2} and \ref{sec:uniformcovering-general} we prove the existence of uniform coverings for the matrices of interest, first for the case $d=2$ and then for the general case. Then we show how to combine these results together to obtain the exponential lower bound of Theorem \ref{thm:main}.

\section{Proof of main result}

\subsection{Definitions}
\label{sec:defs}

\paragraph{Lifts of polytopes and slack matrix} We first recall some basic definitions concerning lifts of polytopes and the notion of slack matrix. Let $P \subset \RR^n$ be a polytope. If $\cK$ is a convex cone in $\RR^m$ (with $m \geq n$), we say that $P$ has a $\cK$-lift \cite{gouveia2011lifts} if there exists an affine subspace $L$ of $\RR^m$ and a linear map $\pi:\RR^m \rightarrow \RR^n$ such that $P = \pi(\cK \cap L)$.

Let $v_1,\dots,v_{V}$ be the vertices of $P$ and let $\langle a_i, x \rangle \leq b_i$, $i=1,\dots,F$ be the facet-defining inequalities of $P$. The slack matrix of $P$ is a matrix $S \in \RR^{F\times V}$ where $S_{i,j}$ is the slack of the $j$'th vertex with respect to the $i$'th facet:
\[ S_{i,j} = b_i - \langle a_i, v_j \rangle \quad \forall i=1,\dots,F, \;\; j=1,\dots,V. \]
Observe that $S_{i,j} \geq 0$ for all $i,j$.

Assuming that the cone $\cK$ is self-dual\footnote{Recall that a cone $\cK \subset \RR^m$ is self-dual if $\cK^{*} = \cK$ where $\cK^*$ is the dual cone defined by $\cK^* = \{y \in \RR^m : y^T x \geq 0 \; \forall x \in \cK \}$. The nonnegative orthant $\RR^m_+$ and the cone of symmetric positive semidefinite matrices $\S^m_+$ are self-dual cones.} (in this paper we will be dealing only with self-dual cones), we say that $S$ admits a $\cK$-factorization \cite{gouveia2011lifts} if there exist vectors $x_1,\dots,x_F \in \cK$ and $y_1,\dots,y_V \in \cK$ such that
\[ S_{i,j} = \langle x_i , y_j \rangle \quad \forall i=1,\dots,F, \;\; j=1,\dots,V. \]

We now recall the definitions of nonnegative rank and psd rank of a nonnegative matrix:
\begin{defn}[Nonnegative rank] The \emph{nonnegative rank} of a nonnegative matrix $S \in \RR^{m\times n}_+$, denoted $\rank_+ S$ is the smallest integer $r$ such that we can write for any $i,j$, $S_{i,j} = \langle x_i, y_j \rangle$ where $x_i, y_j \in \RR^r_+$. Note that $\rank_+ S$ is the smallest $r$ such that $S$ admits a $\RR^r_+$-factorization.
\end{defn}
\begin{defn}[PSD rank, cf. \cite{gouveia2011lifts}]
\label{def:psdrank} The \emph{psd rank} of a nonnegative matrix $S\in\RR^{m\times n}_+$, denoted $\rankpsd S$, is the smallest $r$ such that we can write for any $i,j$, $S_{i,j} = \langle X_i, Y_j \rangle$ where $X_i, Y_j \in \S^r_+$.  Note that $\rankpsd S$ is the smallest $r$ such that $S$ admits a $\S^r_+$-factorization.
\end{defn}

The following key theorem from \cite{gouveia2011lifts} gives a necessary and sufficient condition for the existence of a $\cK$-lift of $P$ in terms of $\cK$-factorization of its slack matrix (the statement of the theorem uses the notion of \emph{nice} cones which is defined in \cite{pataki2012connection}---in this paper we deal with Cartesian products of positive semidefinite cones which are indeed nice cones, as shown in the previously cited paper):

\begin{thm}[\cite{gouveia2011lifts}]
\label{thm:factorization-polytope}
 Let $P \subset \RR^n$ be a polytope and let $\cK$ be a \emph{nice} self-dual cone in $\RR^m$. Then $P$ admits a $\cK$-lift if, and only if, the slack matrix of $P$ admits a $\cK$-factorization.
\end{thm}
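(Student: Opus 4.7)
The plan is to prove both directions by leveraging the facet description of $P$ to translate between lifts and factorizations, using the self-duality of $\cK$ to match slack entries with cone inner products.

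For the forward direction, I would suppose $P = \pi(\cK \cap L)$ is a $\cK$-lift and choose, for each vertex $v_j$, a preimage $y_j \in \cK \cap L$ using surjectivity of $\pi$ onto $P$. For each facet $i$, consider the affine functional $\ell_i(z) := b_i - \langle a_i, \pi(z) \rangle$ on $\RR^m$; by construction it is nonnegative on $\cK \cap L$. The crux is to produce an element $x_i \in \cK^* = \cK$ such that $\langle x_i, z \rangle = \ell_i(z)$ for all $z \in L$, for then $\langle x_i, y_j \rangle = \ell_i(y_j) = b_i - \langle a_i, v_j \rangle = S_{ij}$, yielding the desired $\cK$-factorization. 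The existence of $x_i$ is a Farkas-type statement: an affine functional nonnegative on $\cK \cap L$, with $\cK$ nice and self-dual, can be written as the restriction to $L$ of an affine functional whose linear part lies in $\cK^*$. This is equivalent to the identity $(\cK \cap L_0)^* = \cK^* + L_0^\perp$ (without closure) where $L_0$ is the linear subspace parallel to $L$, which is essentially the content of the niceness hypothesis.

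For the backward direction, given $S_{ij} = \langle x_i, y_j \rangle$ with $x_i, y_j \in \cK$, I would define a linear map $\pi: \RR^m \to \RR^n$ sending each $y_j$ to $v_j$, and set
\[ L := \{z \in \RR^m : \langle x_i, z \rangle + \langle a_i, \pi(z) \rangle = b_i \;\text{ for all } i=1,\dots,F\}. \]
Verification that $y_j \in \cK \cap L$ is immediate from the factorization and the definition of $S$. For any $z \in \cK \cap L$, self-duality of $\cK$ gives $\langle x_i, z \rangle \geq 0$, hence $\langle a_i, \pi(z) \rangle \leq b_i$ for every facet $i$, so $\pi(z) \in P$ and thus $\pi(\cK \cap L) \subseteq P$. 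Conversely $\pi(\cK \cap L) \ni \pi(y_j) = v_j$ for each vertex, and convexity of $\pi(\cK \cap L)$ forces $P \subseteq \pi(\cK \cap L)$.

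The hard part will be the Farkas-type extension step in the forward direction, which is exactly where the niceness hypothesis on $\cK$ is essential: without it one only obtains the above dual identity up to topological closure, which would produce a limit of factorizations rather than an honest finite one. A secondary, more technical point is ensuring in the backward construction that a linear map $\pi$ with $\pi(y_j) = v_j$ for every $j$ is well-defined; this reduces to checking that any linear dependency $\sum_j c_j y_j = 0$ forces $\sum_j c_j v_j = 0$, which can be verified by pairing the dependency with the $x_i$ and using that the facet normals $\{a_i\}$ span the dual of $\mathrm{aff}(P)$, or alternatively handled by augmenting the factorization in an enlarged ambient space so that the $y_j$ become affinely independent.
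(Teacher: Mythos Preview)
The paper does not prove this theorem; it is quoted from \cite{gouveia2011lifts} and used as a black box, so there is no in-paper argument to compare against. Your outline is essentially a reconstruction of the Gouveia--Parrilo--Thomas proof and is structurally sound. Two remarks on the details:

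For the backward direction, the dependency check you flag can be completed: if $\sum_j c_j y_j = 0$, pairing with each $x_i$ gives $\bigl(\sum_j c_j\bigr)\,b_i = \bigl\langle a_i,\, \sum_j c_j v_j\bigr\rangle$ for every facet $i$, and for a full-dimensional bounded polytope the facet data force first $\sum_j c_j = 0$ (otherwise some point lies on every facet hyperplane) and then $\sum_j c_j v_j = 0$ (since the $a_i$ span). The alternative of enlarging the ambient space is cleaner to write but, taken literally, replaces $\cK$ by $\RR^n\times\cK$, so you would still owe an argument that the free coordinates do not change the notion of $\cK$-lift.

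For the forward direction, the Farkas-type extension is the right mechanism, but identifying niceness with the closure-free identity $(\cK\cap L_0)^* = \cK^* + L_0^\perp$ for the particular subspace $L_0$ parallel to $L$ is not the definition and does not follow directly. Niceness asserts that $\cK^* + F^\perp$ is closed for every \emph{face} $F$ of $\cK$. In the cited proof this enters through facial reduction: one passes to the minimal face $F$ of $\cK$ containing $\cK\cap L$, where a relative-interior (Slater) point exists, so strong conic duality yields $x_i \in F^*$; niceness then gives $F^* = \cK^* + F^\perp$ without closure, allowing $x_i$ to be replaced by an element of $\cK^*$ with identical pairings against every $y_j \in \cK\cap L \subseteq F$. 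So the hypothesis is indeed essential at the step you identify, but it acts through the face $F$ rather than through $L_0$ directly.
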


In this paper we are interested in the case where $\cK$ is the Cartesian product of the cones $\S^{d}_+$ where $d \geq 1$ is a fixed constant. Define $\xc_{\S^{d}_+}(P)$ to be the least integer $r$ such that $P$ admits a $(\S^{d}_+)^r$-lift.
Also given a nonnegative matrix $A$, define $\rank_{\S^{d}_+}(A)$ to be the least $r$ such that $A$ can be written as the sum of $r$ matrices $A_1,\dots,A_r$ where each $A_i$ admits a $\S^{d}_+$-factorization (i.e., $\rankpsd(A_i) \leq d$).
A consequence of Theorem \ref{thm:factorization-polytope} is that
\[ \xc_{\S^{d}_+}(P) = \rank_{\S^{d}_+}(S) \]
where $S$ is the slack matrix of $P$. Again, observe that for $d=1$, the quantities above are respectively the LP extension complexity and the nonnegative rank.

Our approach to prove Theorem \ref{thm:main} is to show that $\rank_{\S^d_+}(\UDISJ(n))$ is exponentially large in $n$, where $\UDISJ(n)$ is the \emph{unique-disjointness matrix} studied in \cite{fiorini2012linear} and which is a submatrix of the slack matrix of the correlation polytope. The definition of the unique disjointness matrix is given below:

\begin{defn}[Unique disjointness matrix, cf. \cite{fiorini2012linear}]
\label{def:udisj}
The unique disjointness matrix denoted $\UDISJ{(n)}$ is a $2^n\times 2^n$ matrix where rows and columns are indexed by $n$-bit strings and is defined by:
\[ \UDISJ{(n)}_{a,b} = (1-a^T b)^2  \quad \forall a\in \{0,1\}^n, b\in \{0,1\}^n, \]
where the inner product $a^T b$ is understood over $\RR$.
\end{defn}
One can verify that $\UDISJ{(n)}$ is indeed a submatrix of the slack matrix of $\COR(n)$, since for any $a \in \{0,1\}^n$, the following inequality is valid for any $x \in \COR(n)$:
\[ \langle 2\diag(a) - aa^T, x \rangle \leq 1. \]
Furthermore, the slack at vertex $x=bb^T$ where $b \in \{0,1\}^n$ is precisely $(1-a^T b)^2$.

\paragraph{Some notations and terminology} In this paper we will be dealing mostly with matrices where rows and columns are indexed by bit strings in $\{0,1\}^n$, and we assume that they are ordered lexicographically (e.g., if $n=2$, the first row corresponds to $00$, the second row to $01$, the third row to $10$ and the last row to $11$). We will denote the entries of a matrix $M$ using either the subscript notation $M_{a,b}$ or the bracket notation $M[a,b]$ whichever is more convenient. A \emph{disjoint pair} $(a,b)$ is a pair of bit strings such that $a^T b = \sum_{i=1}^n a_i b_i = 0$ (where the summation is understood over $\RR$). In this paper, a \emph{rectangle} $R$ is a 0/1 matrix whose support has the form $I\times J$ where $I$ is a subset of the rows and $J$ is a subset of the columns. We will also sometimes refer to a rectangle as the set $I\times J$ itself, rather than the 0/1 matrix. Finally we will denote by $\1\{T\}$ the indicator function of $T$ which evaluates to $1$ when $T$ is true and $0$ otherwise.

\subsection{Review of the induction argument of Kaibel and Weltge \cite{kaibel2013short}}
\label{sec:reviewproofkaibel}
In \cite{kaibel2013short}, Kaibel and Weltge showed using elementary techniques that\footnote{In fact the lower bound they showed is on the Boolean rank rather than the nonnegative rank, but in this paper we are only interested in $\rank_+$ and $\rank_{\S^d_+}$.} $\rank_+(\UDISJ{(n)}) \geq (3/2)^n$. In this section we briefly review the main idea of their induction argument which will be useful to prove our main result. Let us first define $\cA(n)$ to be the set of rank-one matrices $M \in \RR^{2^n\times 2^n}$ where $M_{a,b} = 0$ whenever $a$ and $b$ intersect in exactly one location:
\begin{equation}
\label{eq:atomsLP}
 \cA(n) = \{ M \in \RR^{2^n \times 2^n}_+ \; : \; M \text{ is rank-one and } M_{a,b} = 0 \; \text{ whenever } a^T b = 1 \}.
\end{equation}
Note that in any decomposition of $\UDISJ{(n)}$ into rank-1 factors, each factor must belong to $\cA(n)$ since $\UDISJ{(n)}_{a,b} = 0$ when $a^T b=1$; this is why we use the notation $\cA(n)$, where $\cA$ stands for ``atoms''.

Also given a matrix $M \in \RR^{2^n \times 2^n}$ denote by $\val(M)$ the number of disjoint pairs $(a,b)$ for which $M_{a,b} > 0$, i.e.,
\[ \val(M) = \left|\left\{(a,b) \in \{0,1\}^n \times \{0,1\}^n \; : \; a^T b = 0 \text{ and } M_{a,b} > 0\right\}\right|. \]
(We will sometimes use the notation $\val_n(M)$ where the subscript $n$ indicates that the matrix $M$ has size $2^n\times 2^n$).
The main idea of the proof of \cite{kaibel2013short} is to show that any nonnegative rank-1 matrix $M \in \cA(n)$ must satisfy $\val(M) \leq 2^n$. Then, since $\val(\UDISJ(n)) = 3^n$ this shows that
\[ \rank_+(\UDISJ(n)) \geq \frac{3^n}{2^n} \]
since in any nonnegative factorization of $\UDISJ(n)$, the rank-one factors must all belong to $\cA(n)$.

To prove that $\val(M) \leq 2^n$ for any $M \in \cA(n)$, one proceeds by induction, as follows: Let $M \in \cA(n)$, and consider the following block-decomposition of $M$:
\[
M = \begin{bmatrix} M^{0,0} & M^{0,1}\\ M^{1,0} & M^{1,1} \end{bmatrix},
\]
where each block $M^{x,y}$ has size $2^{n-1} \times 2^{n-1}$. In this decomposition the top $2^{n-1}$ rows of $M$ correspond to the bit strings that start with a 0 and the bottom $2^{n-1}$ rows are those that start with 1, and similarly for the columns; more formally we have $(M^{x,y})_{a,b} = M[x\concat a,y\concat b]$ where $\concat$ denotes concatenation. The crucial observation in \cite{kaibel2013short} is to note that:
\begin{equation}
 \label{eq:induction-kaibel}
 \val(M) \; \leq \; \val(M^{0,0} + M^{0,1}) \; + \; \val(M^{0,0} + M^{1,0}).
\end{equation}
To see why this is true, observe that we can write:
\[ 
\begin{aligned}
\val(M) \quad &\overset{(*)}{=} \quad \val(M^{0,0}) + \val(M^{0,1}) + \val(M^{1,0})\\
       &= \quad \sum_{\substack{(a,b) \in (\{0,1\}^{n-1})^2\\\text{s.t. } a^T b = 0}} \1\{M^{0,0}_{a,b} > 0\} \; +  \; \1\{M^{0,1}_{a,b} > 0\} \; + \; \1\{M^{1,0}_{a,b} > 0\}.
\end{aligned} \]
where the equality (*) is because in the lower-right block of $M$ (corresponding to $M^{1,1}$) all the bit strings have intersection at least 1---i.e., they are not disjoint.
Now note that for any $(a,b)$ in the summation above, the value of
\[ \1\{M^{0,0}_{a,b} > 0\} \; +  \; \1\{M^{0,1}_{a,b} > 0\} \; + \; \1\{M^{1,0}_{a,b} > 0\} \]
 is at most 2. To see why note that the $2\times 2$ matrix:
\begin{equation}
 \label{eq:2x2matrix}
 \begin{bmatrix} M^{0,0}_{a,b} & M^{0,1}_{a,b}\\ M^{1,0}_{a,b} & M^{1,1}_{a,b} \end{bmatrix}
\end{equation}
is rank-one and $M^{1,1}_{a,b} = 0$: indeed it is rank-one because it is a submatrix of $M$; also $M^{1,1}_{a,b} = 0$ because by definition of $M^{1,1}$ we have $M^{1,1}_{a,b} = M[1\cdot a,1\cdot b]$ and $1\cdot a$ and $1\cdot b$ intersect in exactly one location since $a$ and $b$ are disjoint. Since the matrix \eqref{eq:2x2matrix} is rank-one and $M^{1,1}_{a,b} = 0$, it is not hard to see that we must have either $M^{0,1}_{a,b} = 0$ or $M^{1,0}_{a,b} = 0$. In fact one can verify that the following inequality is true:
\begin{equation}
\label{eq:ineq2x2}
\1\{M^{0,0}_{a,b} > 0\} \; + \; \1\{M^{0,1}_{a,b} > 0\} \; + \; \1\{M^{1,0}_{a,b} > 0\} \;\; \leq \;\; \1\{M^{0,0}_{a,b} + M^{0,1}_{a,b} > 0\} \; + \; \1\{M^{0,0}_{a,b} + M^{1,0}_{a,b} > 0\}.
\end{equation}
Now if we sum inequality \eqref{eq:ineq2x2} over all pairs $(a,b) \in \{0,1\}^{n-1}\times \{0,1\}^{n-1}$ such that $a^T b = 0$ we obtain inequality \eqref{eq:induction-kaibel}.

It now remains to use the induction hypothesis on inequality \eqref{eq:induction-kaibel} to arrive to the result. To use the induction hypothesis we use the following additional crucial fact which one can easily verify: the two matrices $M^{0,0} + M^{0,1}$ and $M^{0,0} + M^{1,0}$ are both elements of $\cA(n-1)$.  Thus by the induction hypothesis $\val(M^{0,0} + M^{0,1}) \leq 2^{n-1}$ and $\val(M^{0,0} + M^{1,0}) \leq 2^{n-1}$ and we get the desired inequality $\val(M) \leq 2^n$.

\subsection{The uniform covering property and an induction lemma}
\label{sec:induction}

In order to prove a lower bound on $\rank_{\S^{d}_+}(\UDISJ(n))$ we use an induction argument inspired by the one of \cite{kaibel2013short} presented above. Our strategy will be to show that any matrix $M$ that admits a $\S^{d}_+$-factorization and such that $M_{a,b} = 0$ when $a^Tb = 1$ must satisfy:
\[ \val(M) \leq t(d)^n \]
where $t(d)$ is a constant satisfying $t(d) < 3$. Then, since $\val(\UDISJ(n)) = 3^n$ this will prove an exponential lower bound on $\rank_{\S^{d}_+}(\UDISJ(n))$:
\[ \rank_{\S^{d}_+}(\UDISJ(n)) \geq \frac{3^n}{t(d)^n} \]
where $3/t(d) > 1$.

The key idea in the induction argument presented in the previous section was to introduce the two matrices $M^{0,0} + M^{0,1}$ and $M^{0,0} + M^{1,0}$  which were chosen so that inequality \eqref{eq:ineq2x2} holds under the additional constraint that they satisfy the induction hypothesis (i.e., they belong to $\cA(n-1)$). In this section we generalize this key idea and we give a general way of constructing such matrices, which will be crucial for us to prove lower bounds.

We start by generalizing the notation $\cA(n)$ to allow for matrices with $K$-factorizations where $K$ is an arbitrary convex cone:
\begin{equation}
\label{eq:atomsK}
 \cA_{K}(n) = \Bigl\{ M \in \RR^{2^n \times 2^n}_+ \; : \; \text{ $M$ admits a $K$-factorization and } M_{a,b} = 0 \; \text{ whenever } a^T b = 1  \Bigr\}.
\end{equation}
Note that the set $\cA(n)$ defined previously in \eqref{eq:atomsLP} corresponds to $\cA_K(n)$ with $K=\RR_+$. When studying $(\S^{d}_+)^r$-factorizations of $\UDISJ(n)$ the set of atoms of interest is $\cA_{K}(n)$ with $K=\S^d_+$. Let also:
\[ \rho_{K}(n) = \max \; \left\{ \; \val(M) \; : \; M \in \cA_{K}(n) \; \right\}. \]
Recall that we are interested in proving upper bounds of the type $\rho_{\S^d_+}(n) \leq t(d)^n$ where $t(d) < 3$.


In this section we prove a general result which allows to obtain an upper bound on $\rho_{K}(n)$ using induction, by simply studying the set of atoms $\cA_{K}(d)$ for a certain \emph{base case}\footnote{Note that this base case $d$ is not related to the size of the semidefinite cones $\S^d_+$; in fact the results in this subsection are general and are valid for any convex cone $K$. Later when we consider the case $K=\S^d_+$ the base case we will study will be the same as the size of the psd cone, and that is why we use the same notation.} $d$. More precisely, we show that if the set $\cA_{K}(d)$, for some \emph{fixed} $d$, has a \emph{$k$-uniform-covering} (cf. definition below) then \emph{for any} $n \geq d$ it holds that:
\begin{equation}
 \label{eq:upperbound-rhoK}
 \rho_K(n) \leq k^{\lfloor (n-1)/d \rfloor +1}.
\end{equation}
We now give the definition of a \emph{$k$-uniform-covering} and we then illustrate it with an example:
\begin{defn}[{\bf $k$-uniform-covering}]
Let $S$ be a subset of nonnegative matrices of size $2^d \times 2^d$. We say that $S$ has a \emph{$k$-uniform-covering} if there exist $k$ rectangles $R_1,\dots,R_k \in \RR^{2^d \times 2^d}$ all of them supported on the disjoint pairs of $\{0,1\}^d\times \{0,1\}^d$ such that the following is true: For any $M \in S$ there exists a one-to-one mapping $\phi$ (that depends on $M$) which maps each nonzero disjoint pair entry of $M$ to a rectangle in $\{1,\dots,k\}$ which is nonzero on this pair. More formally the map $\phi$ must satisfy:
\[ \phi : \{ (x,y) : x^T y = 0 \text{ and } M_{x,y} > 0 \} \rightarrow \{1,\dots,k\} \]
is one-to-one and we have $(x,y) \in R_{\phi(x,y)}$ for all $x,y$ such that $x^T y = 0$ and $M_{x,y} > 0$.
\end{defn}
We now look at a simple example to illustrate the definition of the $k$-uniform covering property.
\begin{example}
\label{ex:uniformcovering}
Recall the set $\cA(n)$ from \eqref{eq:atomsLP} defined by:
\[  \cA(n) = \{ M \in \RR^{2^n \times 2^n}_+ \; : \; M \text{ is rank-one and } M_{a,b} = 0 \; \text{ whenever } a^T b = 1 \}. \]
Consider the set $S = \cA(1)$ which consists of $2\times 2$ rank-one nonnegative matrices $M$ where $M_{1,1} = 0$ (here $M_{1,1}$ is the bottom-right entry of $M$; the top-left entry is $M_{0,0}$). We show in this example that the set $\cA(1)$ has a \emph{2-uniform-covering}. Note that any matrix in $M \in \cA(1)$ must have either $M_{1,0} = 0$ or $M_{0,1} = 0$, i.e., it has one of the two following sparsity patterns:
\[ \begin{bmatrix} \times & 0\\ \times & 0 \end{bmatrix} \quad \text{ or } \quad \begin{bmatrix} \times & \times \\ 0 & 0 \end{bmatrix} \]
where $\times$ indicates a nonnegative entry.
 Consider the following two rectangles $R_1 = \{0\} \times \{0,1\}$ and $R_2 = \{0,1\}\times \{1\}$ which are depicted in Figure \ref{fig:figure_example_rectangles_2x2}.
\begin{figure}[ht]
  \centering
  \includegraphics[width=6cm]{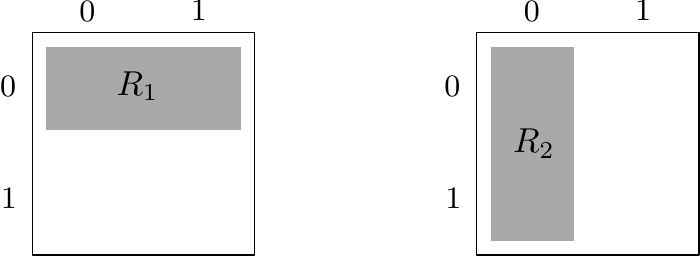}
  \caption{Rectangles $R_1$ and $R_2$ showing that the set $\cA(1)$ has the 2-covering property.}
  \label{fig:figure_example_rectangles_2x2}
\end{figure}

Now let $M$ be any matrix in $\cA(1)$, i.e., $M$ is a rank-one $2\times 2$ matrix such that $M_{1,1} = 0$. We need to construct a map $\phi$ which maps each nonzero entry of $M$ to an associated rectangle which is nonzero on this entry; also each entry has to be mapped to a different rectangle. Since we know that one of the entries $M_{0,1}$ or $M_{1,0}$ must be zero, we can construct the map $\phi$ as follows:
\begin{itemize}
\item If $M_{0,1} = 0$, let $\phi(0,0) = 1$ and $\phi(1,0) = 2$.
\item If $M_{1,0} = 0$, let $\phi(0,0) = 2$ and $\phi(0,1) = 1$.
\end{itemize}
It is easy to see that $\phi$ is a valid map for the definition of 2-uniform-covering property, and thus it shows that $\cA(1)$ has the $2$-uniform-covering property. As a consequence, inequality \eqref{eq:upperbound-rhoK} (proved in Theorem \ref{cor:exponential-upper-bound} below) shows that $\val(M) \leq 2^n$ for all $M \in \cA(n)$ and all $n \geq 1$, and we thus recover the main result of Kaibel and Weltge \cite{kaibel2013short}.

Note that in this example, the key to showing that $\cA(1)$ has the $2$-uniform-covering property was the fact that the sparsity pattern of any $M \in \cA(1)$ must have either $M_{0,1} = 0$ or $M_{1,0} = 0$. In the next section we will prove the $k$-uniform-covering property for other sets of atoms --namely for the set $\cA_{\S^d_+}(d)$-- and to do this we will need a key lemma on the sparsity pattern of the matrices in $\cA_{\S^d_+}(d)$.
\end{example}

We now formally state the main result of this section:
\begin{thm}
\label{cor:exponential-upper-bound}
Let $K$ be a convex cone and assume that, for some $d \in \NN$, $\cA_K(d)$ has a $k$-uniform-covering. Then for any $n\geq d$ and any $M \in \cA_K(n)$ it holds $\val(M) \leq k^{\lfloor (n-1)/d \rfloor +1}$.
\end{thm}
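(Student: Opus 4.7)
The plan is to prove the bound by induction on $n$, generalizing the one-bit splitting of Kaibel and Weltge to a $d$-bit splitting that invokes the $k$-uniform-covering of $\cA_K(d)$. The heart of the argument will be a recursion $\rho_K(n) \leq k \cdot \rho_K(n-d)$ for $n > d$, which combined with the base bound $\rho_K(m) \leq k$ for all $1 \leq m \leq d$ yields $\rho_K(n) \leq k^{\lfloor (n-1)/d \rfloor + 1}$ after $\lfloor(n-1)/d\rfloor$ iterations.

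For the recursion, I would fix $M \in \cA_K(n)$ and write every bit string as $a = a'\concat a''$ with $a' \in \{0,1\}^{n-d}$ and $a'' \in \{0,1\}^d$. For each disjoint pair $(a', b')$, the $2^d\times 2^d$ submatrix $M^{a',b'}_{a'',b''} := M[a'a'', b'b'']$ lies in $\cA_K(d)$: it inherits a $K$-factorization from $M$, and the identity $(a'a'')^T(b'b'') = (a')^Tb' + (a'')^Tb''$ reduces its zero pattern to the condition $(a'')^Tb''=1$. Applying the $k$-uniform-covering supplies rectangles $R_1, \ldots, R_k$ and an injection $\phi_{a',b'}$ from the nonzero disjoint entries of $M^{a',b'}$ into $\{1,\ldots,k\}$ with $(a'',b'') \in R_i$ whenever $\phi_{a',b'}(a'',b'') = i$. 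I then form the marginal matrices
\[ (M_i)_{a',b'} := \sum_{(a'',b'')\in R_i} M[a'a'', b'b''], \qquad i=1,\ldots,k, \]
and claim $M_i \in \cA_K(n-d)$: writing $R_i = I_i \times J_i$ and a $K$-factorization $M[u,v] = \langle X_u, Y_v \rangle$, the sum passes through the bilinear pairing, so $(M_i)_{a',b'} = \langle \sum_{u\in I_i} X_{a'u}, \sum_{v\in J_i} Y_{b'v}\rangle$ is a $K$-factorization by closure of $K$ under sums; and if $(a')^Tb'=1$, every $(a'',b'') \in R_i$ is disjoint, so $(a'a'')^T(b'b'')=1$ forces each summand to vanish. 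Using the injection together with the implication $i \in \mathrm{image}(\phi_{a',b'}) \Rightarrow (M_i)_{a',b'} > 0$, and the fact that a disjoint pair in $(\{0,1\}^n)^2$ splits into disjoint pairs on both halves, I obtain $\val_n(M) \leq \sum_{i=1}^k \val_{n-d}(M_i) \leq k\cdot \rho_K(n-d)$.

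The base bound $\rho_K(m) \leq k$ for $1 \leq m \leq d$ is immediate for $m=d$ from the covering. For $m < d$ I would use a zero-padding embedding: given $M \in \cA_K(m)$, define $\hat M \in \RR^{2^d\times 2^d}$ by $\hat M[0^{d-m}a, 0^{d-m}b] := M[a,b]$ for $a,b \in \{0,1\}^m$ and zero on all other entries. A zero-padded $K$-factorization shows $\hat M \in \cA_K(d)$, and $\val_d(\hat M) = \val_m(M) \leq k$. Iterating the recursion $q := \lfloor(n-1)/d\rfloor$ times then gives $\rho_K(n) \leq k^q \cdot \rho_K(n-qd) \leq k^{q+1}$, since $n - qd \in [1, d]$. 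The main obstacle I expect is the verification that $M_i \in \cA_K(n-d)$, and in particular that it inherits a $K$-factorization; this will rely on both the rectangular structure $R_i = I_i \times J_i$ (so the sum factorizes through the bilinear pairing) and closure of $K$ under nonnegative sums---these two ingredients are precisely what make the argument go through for an arbitrary convex cone $K$ rather than just $\RR_+$.
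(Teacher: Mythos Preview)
Your proposal is correct and follows essentially the same route as the paper: the recursion $\rho_K(n)\le k\,\rho_K(n-d)$ is exactly Lemma~\ref{lem:main-induction} (the paper splits off the \emph{first} $d$ bits rather than the last, a purely cosmetic difference), and the paper then solves the recursion just as you do. Your zero-padding argument for the base bound $\rho_K(m)\le k$ when $m<d$ is a detail the paper leaves implicit (it only states $\rho_K(d)\le k$), so your write-up is in fact slightly more careful on this point.
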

To prove Theorem \ref{cor:exponential-upper-bound} we need the following lemma which can be seen as the generalization of inequality \eqref{eq:induction-kaibel} from Section \ref{sec:reviewproofkaibel} (the matrices $M_1,\dots,M_k$ defined in the lemma below play the same role as the matrices $M^{0,0}+M^{0,1}$ and $M^{0,0}+M^{1,0}$ there).

\begin{lem}
\label{lem:main-induction}
Let $K$ be a convex cone, $d \in \NN$ and assume that $\cA_K(d)$ has a $k$-uniform-covering for some $k\in\NN$. Let $n \geq d$ and let $\bM$ be any matrix in $\cA_{K}(n)$. Consider the $2^d \times 2^d$ block-decomposition of $\bM$ where each block $M^{x,y} \in \RR^{2^{n-d}\times 2^{n-d}}$ is specified by $(x,y) \in \{0,1\}^d \times \{0,1\}^d$ and is defined by:
\[ M^{x,y}[a,b] = \bM[x\concat a, y \concat b] \quad \forall (a,b) \in \{0,1\}^{n-d} \times \{0,1\}^{n-d} \]
where $\concat$ indicates concatenation of bit strings.

\noindent Then we have:
\begin{equation}
 \label{eq:main-ineq}
 \val_n(\bM) \leq \sum_{i=1}^{k} \val_{n-d}(M_i)
\end{equation}
where for each $i=1,\dots,k$, $M_i \in \cA_{K}(n-d)$ is defined by:
\begin{equation}
 \label{eq:defMi}
 M_i = \sum_{(x,y) \in R_i} M^{x,y}
\end{equation}
where the $R_i$'s are the rectangles from the $k$-uniform-covering assumption on $\cA_K(d)$.
\end{lem}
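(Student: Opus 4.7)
The plan is to imitate the block-decomposition argument of Section \ref{sec:reviewproofkaibel} but with $2^d \times 2^d$ blocks in place of $2\times 2$ blocks, using the hypothesized $k$-uniform-covering of $\cA_K(d)$ in place of the ad-hoc $2$-covering of Example \ref{ex:uniformcovering}. The proof then splits into two independent tasks: (i) verifying that each $M_i$ from \eqref{eq:defMi} actually lies in $\cA_K(n-d)$, so that the induction hypothesis of Theorem \ref{cor:exponential-upper-bound} can later be applied to it; and (ii) establishing the counting inequality \eqref{eq:main-ineq}.

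For (i), I would observe that every block $M^{x,y}$ is a submatrix of $\bM$, so any $K$-factorization of $\bM$ restricts to a $K$-factorization of $M^{x,y}$ by selecting the appropriate rows and columns of the factors. Since sums of $K$-factorizable matrices are $K$-factorizable (concatenate the factor tuples), $M_i = \sum_{(x,y)\in R_i} M^{x,y}$ inherits a $K$-factorization. For the sparsity condition, pick $(a,b) \in \{0,1\}^{n-d}\times\{0,1\}^{n-d}$ with $a^Tb = 1$. For every $(x,y)\in R_i$ we have $x^Ty = 0$ by the definition of a uniform covering, so $(x\concat a)^T(y\concat b) = x^Ty + a^Tb = 1$, forcing $M^{x,y}[a,b] = \bM[x\concat a,\,y\concat b] = 0$; summing over $(x,y)\in R_i$ gives $(M_i)_{a,b}=0$.

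For (ii), the key move is to slice $\bM$ transversely to the block decomposition. For each $(a,b)\in\{0,1\}^{n-d}\times\{0,1\}^{n-d}$ with $a^Tb = 0$, define the $2^d\times 2^d$ matrix $N^{(a,b)}$ by $N^{(a,b)}[x,y] = M^{x,y}[a,b] = \bM[x\concat a,\,y\concat b]$. The same submatrix-and-concatenation arguments as in (i) show $N^{(a,b)} \in \cA_K(d)$, so the uniform-covering hypothesis supplies a one-to-one map $\phi^{(a,b)}$ from the nonzero disjoint-pair entries of $N^{(a,b)}$ into $\{1,\dots,k\}$ with $(x,y)\in R_{\phi^{(a,b)}(x,y)}$. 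Whenever $i$ lies in the image of $\phi^{(a,b)}$, there is some $(x,y)\in R_i$ with $M^{x,y}[a,b] > 0$, and by nonnegativity this forces $(M_i)[a,b] > 0$. Hence the number of nonzero disjoint-pair entries of $N^{(a,b)}$ is bounded by $\#\{i : (M_i)[a,b] > 0\}$. Using the identity
\[ \val_n(\bM) \;=\; \sum_{\substack{(a,b) \\ a^Tb = 0}} \#\bigl\{(x,y) : x^Ty = 0,\; N^{(a,b)}[x,y] > 0\bigr\}, \]
which comes from reindexing a disjoint pair $(\alpha,\beta)\in(\{0,1\}^n)^2$ as $(\alpha,\beta) = (x\concat a, y\concat b)$ with $\alpha^T\beta = x^Ty + a^Tb$ vanishing iff both halves vanish, summing the pointwise bound and swapping the order of summation gives \eqref{eq:main-ineq}.

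I do not anticipate a substantive obstacle; the whole argument is driven by the single identity $(x\concat a)^T(y\concat b) = x^Ty + a^Tb$, which simultaneously decomposes disjoint pairs of $\{0,1\}^n$ into pairs of disjoint pairs and propagates the zero pattern of $\bM$ both to the blocks $M^{x,y}$ and to the transverse slices $N^{(a,b)}$. The only thing to keep track of is the bookkeeping between these two reindexings of $\bM$.
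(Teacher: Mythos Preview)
Your argument for part (ii) is essentially identical to the paper's; the transverse slices $N^{(a,b)}$ are exactly what the paper calls $(M^{x,y}[a,b])_{x,y}$, and you push through the injection $\phi^{(a,b)}$ in the same way.

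There is, however, a genuine gap in part (i). You write that ``sums of $K$-factorizable matrices are $K$-factorizable (concatenate the factor tuples)''. This is false in general: concatenating factor tuples produces a $(K\times K\times\cdots\times K)$-factorization, not a $K$-factorization. Take $K=\RR_+$, where $K$-factorizable means nonnegative rank one; the sum of two rank-one matrices need not be rank one. If this step worked as you say, the rectangles in the definition of a uniform covering could be replaced by arbitrary subsets of disjoint pairs, which is much too strong.

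The correct argument---and the reason the paper insists that the $R_i$ be rectangles---is as follows. Write $R_i = I_i \times J_i$ and take a $K$-factorization $\bM[a',b'] = \langle U_{a'}, V_{b'}\rangle$ with $U_{a'},V_{b'}\in K$. Then
\[
M_i[a,b] \;=\; \sum_{x\in I_i}\sum_{y\in J_i} \langle U_{x\concat a},\, V_{y\concat b}\rangle
\;=\; \Bigl\langle \sum_{x\in I_i} U_{x\concat a},\; \sum_{y\in J_i} V_{y\concat b}\Bigr\rangle,
\]
and both sums lie in $K$ because $K$ is a convex cone. The product structure of $R_i$ is exactly what makes the double sum factor; your block-by-block approach discards this and cannot recover it. Once you replace your sentence with this computation, the rest of your proof is fine.
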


\begin{proof}[Proof of Lemma \ref{lem:main-induction}]
Let $\bM \in \cA_K(n)$ with $n \geq d$, i.e., $\bM$ admits a $K$-factorization and $\bM_{a,b} = 0$ whenever $a^T b = 1$. To prove \eqref{eq:main-ineq}, note that:
\[ \val_n(\bM) = \sum_{\substack{(a,b) \in (\{0,1\}^{n-d})^2 \\ \text{ s.t. } a^T b = 0}}  \val_d\left( (M^{x,y}[a,b])_{x,y} \right) \]
where $(M^{x,y}[a,b])_{x,y}$ denotes the $2^d \times 2^d$ submatrix of $\bM$ whose rows and columns are indexed by $x$ and $y$ respectively (and $(a,b)$ is fixed). We will show that
\begin{equation}
 \label{eq:ineq2}
 \val_d((M^{x,y}[a,b])_{x,y}) \quad \leq \quad \sum_{i=1}^k \mathbf{1} \{ M_i[a,b] > 0 \}.
\end{equation}
where the $M_i$'s are defined according to \eqref{eq:defMi}.
First, note that the $2^d \times 2^d$ matrix $(M^{x,y}[a,b])_{x,y}$ belongs to $\cA_K(d)$: indeed it admits a $K$-factorization because it is a submatrix of $\bM$; also since $a^T b = 0$ we have $M^{x,y}[a,b] = 0$ whenever $x^T y = 1$. Thus by the $k$-uniform-covering assumption on $\cA_K(d)$ there exists a map $\phi$ which injectively maps each nonzero disjoint pair $(x,y)$ of $(M^{x,y}[a,b])_{x,y}$ to a rectangle $R_{\phi(x,y)}$. Now observe that if $M^{x,y}[a,b] > 0$ where $x^T y = 0$ then necessarily $M_{\phi(x,y)}[a,b] > 0$ where $\phi(x,y)$ is the index of the rectangle associated to the $(x,y)$ pair (indeed: since $(x,y) \in R_{\phi(x,y)}$ we have $M_{\phi(x,y)}[a,b] \geq M^{x,y}[a,b] > 0$). Since the map $\phi$ is one-to-one, this shows that any disjoint pair $(x,y)$ where $M^{x,y}[a,b] > 0$ which is counted in the left-hand side of \eqref{eq:ineq2} will also be counted in the right-hand side. Thus this proves inequality \eqref{eq:ineq2}. To obtain \eqref{eq:main-ineq} we simply have to sum \eqref{eq:ineq2} over all $(a,b) \in (\{0,1\}^{n-d})^2$ such that $a^T b = 0$.\\

To finish the proof we have to show that each $M_i$, $i=1,\dots,k$ is an element of $\cA_K(n-d)$. It is not difficult to see that $M_i$ admits a $K$-factorization since $\bM$ has a $K$-factorization and $R_i$ is a rectangle (note: this is the main reason why we require the $R_i$'s to be rectangles). Also if $(a,b) \in (\{0,1\}^{n-d})^2$ is such that $a^T b = 1$ then
\[ M_i[a,b] = \sum_{(x,y) \in R_i} M^{x,y}[a,b] = \sum_{(x,y) \in R_i} \bM[x\concat a, y \concat b] = 0 \]
because for any $(x,y) \in R_i$ we have $(x \concat a)^T (y \concat b) = a^T b = 1$ since $x^T y = 0$.

Thus this finishes the proof of the lemma.
\end{proof}

\begin{proof}[Proof of Theorem \ref{cor:exponential-upper-bound}]
The previous lemma shows that for any $n\geq d$ we have:
\[ \rho_{K}(n) \leq k \cdot \rho_{K}(n-d). \]
Thus, solving the recursion we get (using the fact that $\rho_{K}(d) \leq k$):
\[ \rho_{K}(n) \leq k^{\lfloor (n-1)/d \rfloor + 1}. \]
\end{proof}

%

\paragraph{Positive-semidefinite cones} In this paper, we are interested in the case where $K = \S^{d}_+$. In the next section we will first consider the special case $d=2$ and we will show that $\cA_{\S^2_+}(2)$  has a $k$-uniform-covering with $k=7$. This will be done by analyzing the sparsity pattern of matrices in $\cA_{\S^2_+}(2)$ (cf. Lemma \ref{lem:sparsityell=2}).
In Section \ref{sec:uniformcovering-general} we will prove the general case and we will show that $\cA_{\S^{d}_+}(d)$ has a $k$-uniform-covering with $k = 3^d - 1 < 3^d$. The proof is by induction and it also relies on a key lemma analyzing the sparsity pattern of matrices in $\cA_{\S^d_+}(d)$ (cf. Lemma \ref{lem:sparsity}).

\subsection{A uniform covering for the case $d=2$}
\label{sec:uniformcovering-d=2}
The objective of this section is to show the following result:
\begin{thm}
\label{thm:uniformcovering-d=2}
$\cA_{\S^2_+}(2)$ has a $k$-uniform-covering with $k=7$.
\end{thm}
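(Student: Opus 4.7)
The plan is to classify the possible sparsity patterns of $M \in \cA_{\S^2_+}(2)$ via the geometry of $\S^2_+$-factorisations, and then to exhibit seven rectangles that cover every such pattern. Write the factorisation as $M_{a,b} = \langle X_a, Y_b \rangle$ with $X_a, Y_b \in \S^2_+$ for $a,b \in \{0,1\}^2$. The six forbidden entries (those with $a^T b = 1$) translate into range-orthogonality relations $\mathrm{range}(X_a) \perp \mathrm{range}(Y_b)$ among the six ``non-trivial'' factors $X_{01},X_{10},X_{11},Y_{01},Y_{10},Y_{11}$. Since we work in $\RR^2$, each factor has rank $0$, $1$ or $2$, and a rank-$2$ factor is orthogonal only to the zero matrix; this quickly eliminates the rank-$2$ possibilities and leaves a bipartite ``constraint graph'' on the directions of the rank-$1$ factors.

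The first step is a structural sparsity lemma. By case-analysis on which of the six factors vanish, I expect to show that the maximal realisable supports on the nine disjoint pairs are the two seven-element templates
\begin{align*}
\Pi_1 &= \{(00,00),(00,01),(00,10),(00,11),(01,00),(10,00),(11,00)\}, \\
\Pi_2 &= \{(00,00),(00,01),(00,10),(01,00),(10,00),(01,10),(10,01)\},
\end{align*}
together with a small family of six-element ``hybrid'' templates (e.g.\ $\{(00,00),(00,10),(00,11),(01,00),(01,10),(11,00)\}$, arising from the configuration $X_{10}=X_{11}=Y_{01}=0$, and its image under transposition). The core chain of reasoning is: if all of $X_{01},X_{10},X_{11},Y_{01},Y_{10},Y_{11}$ are rank-$1$ nonzero, then $X_{01},X_{10},X_{11}$ share a common direction (each perpendicular to $Y_{11}$, the others being forced parallel by the remaining pairwise constraints), and similarly $Y_{01},Y_{10},Y_{11}$ share a perpendicular direction; this forces the entries $(01,10)$ and $(10,01)$ to vanish. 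Other maximal patterns appear only when this chain is broken by vanishing factors.

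For the construction, I propose the seven rectangles
\begin{align*}
R_1 &= \{00\}\times\{00,01,10,11\}, & R_2 &= \{00,01,10,11\}\times\{00\}, \\
R_3 &= \{00,01\}\times\{00,10\},    & R_4 &= \{00,10\}\times\{00,01\}, \\
R_5 &= \{00\}\times\{01\},          & R_6 &= \{01,10\}\times\{00\}, \\
R_7 &= \{00\}\times\{00,10\}.
\end{align*}
Each is supported on disjoint pairs, and the four ``corner'' pairs $(00,11),(11,00),(01,10),(10,01)$ lie in a unique $R_i$ each (respectively $R_1,R_2,R_3,R_4$). Given any maximal template $\Pi$, the covering injection $\phi$ is obtained by first sending each of the at-most-two corner pairs of $\Pi$ to the corresponding $R_i$, and then distributing the common pairs among the remaining rectangles; for $\Pi_1$ one takes $(00,11){\to}R_1,(11,00){\to}R_2,(00,10){\to}R_3,(10,00){\to}R_4,(00,01){\to}R_5,(01,00){\to}R_6,(00,00){\to}R_7$, and for $\Pi_2$ one takes $(01,10){\to}R_3,(10,01){\to}R_4,(00,10){\to}R_1,(10,00){\to}R_2,(00,01){\to}R_5,(01,00){\to}R_6,(00,00){\to}R_7$; the six-element hybrids are handled similarly, using the slack that $R_7$ contains both $(00,00)$ and $(00,10)$ while $R_6$ contains both $(01,00)$ and $(10,00)$.

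The main obstacle will be the sparsity classification in Step 1: although the bipartite constraint graph has only six vertices, the analysis when several factors simultaneously vanish is intricate, and the crucial non-trivial input is to rule out any template that would combine three of the four corner pairs with five common pairs, which would cause a collision in the block $R_5,R_6,R_7$. Establishing this requires the rank-one chain argument sketched above together with a careful enumeration of vanishing-factor subcases, making sure that every configuration with three nonzero corner pairs forces at least two common entries to vanish so that the total support never exceeds seven.
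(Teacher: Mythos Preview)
Your approach is viable and should lead to a correct proof, but it differs substantially from the paper's argument, and a few details in your sketch are inaccurate.

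The paper does \emph{not} attempt to pin down the maximal achievable supports. Its sparsity lemma gives a coarser classification: every $M\in\cA_{\S^2_+}(2)$ has support contained in one of six explicit seven-element ``super-patterns'' (your $\Pi_1,\Pi_2$ are two of them; the other four arise when one of $X_{01},X_{10},Y_{01},Y_{10}$ vanishes). The paper then takes as its seven rectangles \emph{one} copy of $\{00\}\times\{00,01,10,11\}$ and \emph{two copies each} of $\{00,01,10,11\}\times\{00\}$, $\{00,01\}\times\{00,10\}$, $\{00,10\}\times\{00,01\}$, and verifies the injections for all six super-patterns by an explicit table. This works regardless of whether those super-patterns are fully achievable. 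Your route is sharper: you argue that only $\Pi_1$ and $\Pi_2$ are actually achievable at size seven, which lets you use seven \emph{distinct} rectangles. This buys a finer structural statement but at the price of a more delicate case analysis; the paper's duplication trick sidesteps that analysis entirely.

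Two small errors to fix: (i) $X_{11}$ is \emph{not} perpendicular to $Y_{11}$, since $11^T 11=2$; the common-direction conclusion for $X_{01},X_{10}$ comes from $X_{01},X_{10}\perp Y_{11}$ alone, while $X_{11}$ is constrained by $Y_{01},Y_{10}$. (ii) Your sample hybrid $\{(00,00),(00,10),(00,11),(01,00),(01,10),(11,00)\}$ cannot arise from ``$X_{10}=X_{11}=Y_{01}=0$'' since $X_{11}=0$ would kill $(11,00)$; the correct mechanism is that the three corners $(00,11),(01,10),(11,00)$ force $X_{10}=0$ and $Y_{01}=0$.

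Finally, be precise about what you need in the three-corner case. It is not enough that the total support ``never exceeds seven'': your rectangle $R_5=\{00\}\times\{01\}$ covers only the single pair $(00,01)$, so a hypothetical seven-element pattern with corners $(00,11),(01,10),(11,00)$ and the four commons $(00,00),(00,10),(01,00),(10,00)$ would defeat your covering (after the forced assignments to $R_1,R_2,R_3$, the four remaining commons would have to match into $\{R_4,R_6,R_7\}$). What you actually need---and what is true---is that three nonzero corners force at least two \emph{specific} commons to vanish (e.g.\ $(00,11),(01,10),(11,00)$ force $X_{10}=Y_{01}=0$, hence $(10,00)=(00,01)=0$), leaving at most six entries in a shape your rectangles do handle. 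Make sure your write-up states and proves this explicitly for both three-corner configurations.
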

If we combine the theorem above with Theorem \ref{cor:exponential-upper-bound} from Section \ref{sec:induction} we get the following exponential lower bound on the size of $\S^2_+$-lifts of $\COR(n)$:
\[ \rank_{\S^2_+}(\UDISJ(n)) \geq \frac{1}{\sqrt{7}} \left(\sqrt{\frac{9}{7}}\right)^n \approx 0.37 \times 1.13^n. \]


To prove Theorem \ref{thm:uniformcovering-d=2} we proceed by analyzing the possible sparsity patterns of matrices $M \in \cA_{\S^2_+}(2)$, i.e., $4\times 4$ matrices with  $\rankpsd(M) \leq 2$ and $M_{a,b} = 0$ for $a^T b = 1$.
\begin{lem}
\label{lem:sparsityell=2}
Any $4\times 4$ matrix $M \in \cA_{\S^2_+}(2)$ has one of the following six sparsity patterns below:
{\small
\[
\begin{array}{rl}
& 
\textup{(1)} \;\; 
\begin{bmatrix}
\times & \times & \times & \times\\
\times & 0 & 0 & 0\\
\times & 0 & 0 & 0\\
\times & 0 & 0 & ?
\end{bmatrix}
\quad \textup{or} \quad
\textup{(2)} \;\;
\begin{bmatrix}
\times & \times & \times & 0\\
\times & 0 & \times & 0\\
\times & \times & 0 & 0\\
0 & 0 & 0 & ?
\end{bmatrix}
\quad \textup{or} \quad
\textup{(3)} \;\;
\begin{bmatrix}
\times & \times & \times & \times\\
0 & 0 & 0 & 0\\
\times & \times & 0 & 0\\
\times & 0 & 0 & ?
\end{bmatrix}
\\
& 
\\
\textup{or}
&
\textup{(4)} \;\;
\begin{bmatrix}
\times & \times & \times & \times\\
\times & 0 & \times & 0\\
0 & 0 & 0 & 0\\
\times & 0 & 0 & ?
\end{bmatrix}
\quad \textup{or} \quad
\textup{(5)} \;\;
\begin{bmatrix}
\times & 0 & \times & \times\\
\times & 0 & \times & 0\\
\times & 0 & 0 & 0\\
\times & 0 & 0 & ?
\end{bmatrix}
\quad \textup{or} \quad
\textup{(6)} \;\;
\begin{bmatrix}
\times & \times & 0 & \times\\
\times & 0 & 0 & 0\\
\times & \times & 0 & 0\\
\times & 0 & 0 & ?
\end{bmatrix}
\end{array}
\]
}
In particular $\val(M) \leq 7$. 
\end{lem}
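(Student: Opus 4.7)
My plan is to exploit the $\S^2_+$-factorization $M[a,b] = \langle X_a, Y_b \rangle$ with $X_a, Y_b \in \S^2_+$ for $a, b \in \{0,1\}^2$, and translate the zero constraints $M[a,b]=0$ (for $a^T b = 1$) into orthogonality relations on these $2\times 2$ PSD matrices. For $X, Y \in \S^2_+$ one has $\langle X, Y \rangle = 0 \Leftrightarrow XY = 0 \Leftrightarrow \Im(Y) \subseteq \Ker(X)$, so the six forced zeros become
\begin{align*}
\Im(Y_{01}), \Im(Y_{11}) &\subseteq \Ker(X_{01}),\\
\Im(Y_{10}), \Im(Y_{11}) &\subseteq \Ker(X_{10}),\\
\Im(Y_{01}), \Im(Y_{10}) &\subseteq \Ker(X_{11}).
\end{align*}

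I would then proceed by case analysis on the ranks of the six matrices $X_{01}, X_{10}, X_{11}, Y_{01}, Y_{10}, Y_{11}$, each lying in $\{0,1,2\}$. The easy cases dispose of themselves: if any matrix has rank $2$, its kernel is trivial, forcing several of the others to vanish; this zeroes out whole rows or columns of $M$ and matches one of the six patterns. Similarly, if any of $X_{01}, X_{10}, Y_{01}, Y_{10}$ is the zero matrix, the corresponding row or column of $M$ vanishes, giving patterns (3), (4), (5), or (6) respectively.

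The core case is when $X_{01}, X_{10}, Y_{01}, Y_{10}$ are all of rank exactly $1$. If in addition $X_{11} = Y_{11} = 0$, then $M[00,11] = M[11,00] = 0$ and the matrix fits pattern (2). Otherwise, at least one of $X_{11}, Y_{11}$ is rank $1$; say $X_{11}$ has rank $1$ (the other case is symmetric). Then $\Ker(X_{11})$ is a fixed $1$-dim subspace of $\RR^2$, and combined with $Y_{01}, Y_{10}$ being rank $1$ the inclusion $\Im(Y_{01}), \Im(Y_{10}) \subseteq \Ker(X_{11})$ forces $\Im(Y_{01}) = \Im(Y_{10}) = \Ker(X_{11})$. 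Using the uniqueness of the orthogonal complement of a $1$-dim subspace in $\RR^2$ (together with $\Im(X) = \Ker(X)^{\perp}$ for $X \in \S^2_+$), the relations $\Im(Y_{01}) \subseteq \Ker(X_{01})$ and $\Im(Y_{10}) \subseteq \Ker(X_{10})$ then collapse to $\Im(X_{01}) = \Im(X_{10})$. Writing $\Im(X_{01}) = \Im(X_{10}) = \linspan(u)$ and $\Im(Y_{01}) = \Im(Y_{10}) = \linspan(v)$ with $u \perp v$, a short calculation gives $M[01,10] = \trace(X_{01} Y_{10}) \propto (u^T v)^2 = 0$ and likewise $M[10,01] = 0$, so the matrix fits pattern (1).

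The main obstacle I anticipate is ensuring that the case decomposition is exhaustive and that each subcase fits cleanly into one of the six prescribed patterns; careful bookkeeping of which entries are forced to zero in each case is the crux. The decisive ingredient is the uniqueness of the orthogonal complement of a $1$-dim subspace in $\RR^2$, which is precisely what produces the alignment $\Im(X_{01}) = \Im(X_{10})$ and hence pattern (1). This rigidity disappears in higher ambient dimension, which is presumably why the general case (Section \ref{sec:uniformcovering-general}) demands a more involved argument. Finally, inspection shows that each of the six patterns has exactly seven $\times$-entries among the nine disjoint-pair positions, so $\val(M) \leq 7$.
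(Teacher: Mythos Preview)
Your proof is correct and follows essentially the same approach as the paper: both take a $\S^2_+$-factorization, translate the forced zeros into image/kernel inclusions, and use the rigidity of $1$-dimensional subspaces in $\RR^2$ to force additional zeros. The case decomposition is organized slightly differently---the paper splits first on whether $\Im(U_{01})=\Im(U_{10})$ or $\Im(V_{01})=\Im(V_{10})$ (which immediately gives pattern~(1)), whereas you reach the same equality via the pivot $\rank(X_{11})=1$; but this is bookkeeping, and your route (through $\Im(X_{01})=\Im(X_{10})$ and $u\perp v$) is just a mildly longer path to the same conclusion that $\Im(Y_{01})=\Im(Y_{10})$ already yields directly.
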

\begin{proof}
Let $M \in \cA_{\S^2_+}(2)$ and let $M_{a,b} = \langle U_a, V_b \rangle$ be a psd-rank-2 factorization of $M$, where $U_a, V_b \in \S^2_+$. Note that, by ``default'', the $(a,b)$ entry of $M$ is zero whenever $a^T b = 1$. Thus $M$ has the following sparsity pattern:
{\small
\[
\begin{array}{cc}
& \begin{array}{rrrr} 00 & 01 & 10 & 11 \end{array}\\
\begin{array}{r} 00 \\ 01 \\ 10 \\ 11 \end{array} &
\left[
\begin{array}{rrrr}
\times & \times & \times & \times\\
\times & 0 & \times & 0\\
\times & \times & 0 & 0\\
\times & 0 & 0 & ?
\end{array}
\right]
\end{array}
\]}
Using the assumption that $\rankpsd(M) \leq 2$ we need to show that $M$ has some more zeros in some specific locations. For $a\in\{0,1\}^2 , b \in \{0,1\}^2$ let $\cU_a = \Im(U_a)$ and $\cV_b = \Im(V_b)$. We distinguish the following cases:
\begin{itemize}
\item If either $\cU_{01} = \cU_{10}$ or $\cV_{01} = \cV_{10}$ then necessarily $M$ has the pattern (1), i.e., $M_{01,10} = M_{10,01} = 0$. Indeed, assume for example that $\cV_{01} = \cV_{10}$. Since $M_{01,01} = 0$ we have that $U_{01} V_{01} = 0$, i.e., $\Im(V_{01}) \subseteq \Ker(U_{01})$. Thus $\Im(V_{10}) = \Im(V_{01}) \subseteq \Ker(U_{01})$, hence $U_{01} V_{10} = 0$ and $M_{01,10} = 0$. We use the same reasoning to show that $M_{10,01} = 0$.
\item Otherwise we necessarily have $\cU_{10} \neq \cU_{01}$ and $\cV_{01} \neq \cV_{10}$. We distinguish the following subcases:
\begin{itemize}
\item If $\cU_{10} + \cU_{01} = \RR^2$ and $\cV_{10} + \cV_{01} = \RR^2$, then necessarily $\cU_{11} = \cV_{11} = \{0\}$ and thus $M_{11,00} = M_{00,11} = 0$ and hence we are in pattern (2).
\item If $\cU_{10} + \cU_{01} \subsetneq \RR^2$: In this case, since $\cU_{10} \neq \cU_{01}$ we have necessarily either $\cU_{10} = \{0\}$ or $\cU_{01} = \{0\}$. Thus either the 2rd or 3rd row is identically zero and so we are either in pattern (3) or (4).
\item If $\cV_{10} + \cV_{01} \subsetneq \RR^2$: Using the same reasoning as in the previous case, then either the 2rd or 3rd column of $M$ is zero and so we are either in pattern (5) or (6).
\end{itemize}
\end{itemize}
\end{proof}

We now exhibit 7 rectangles and we show that $\cA_{\S^2_+}(2)$ has a 7-uniform-covering. Consider the following four rectangles called $\ra,\rb,\rc,\rd$, which are supported on the disjoint pairs:
\[
\begin{aligned}
 \ra &= \{00\} \times \{00,01,10,11\}\\
 \rb &= \{00,01,10,11\}\times \{00\}\\
 \rc &= \{00,01\}\times \{00,10\}\\
 \rd &= \{00,10\}\times \{00,01\}
\end{aligned}
\]
We choose the seven rectangles $R_1,\dots,R_7$ as follows: we take one copy of $\ra$ and two copies of the three other rectangles, i.e.,
\[ R_1 = \ra, \quad R_2 = \rb_1, \; R_3 = \rb_2, \quad R_4 = \rc_1, \; R_5 = \rc_2, \quad R_6 = \rd_1, \; R_7 = \rd_2. \]
where we used subscripts (e.g., $\rb_1,\rb_2$) to indicate two copies of the same rectangle.
To show that these rectangles are valid, i.e., they satisfy the requirement of a uniform-covering we provide the map $\phi$ for the 6 possible sparsity patterns of a matrix $M \in \cA_{\S^2_+}(2)$. These are shown in Figure \ref{fig:map_phi_ell=2} below -- cf. caption of the figure for details. This terminates the proof of Theorem \ref{thm:uniformcovering-d=2} and shows that $\cA_{\S^2_+}(2)$ has a 7-uniform-covering.
\begin{figure}[ht]
{\small
\[
\begin{array}{l}
\textup{(1)} \;\; 
 \begin{bmatrix}
\rb_2 & \rd_2 & \rc_1 & \ra\\
\rc_2 &      &      & \\
\rd_1 & & &\\
\rb_1
\end{bmatrix}
\quad
\textup{(2)} \; \;
\begin{bmatrix}
\rb_2 & \ra & \rc_1 & \phantom{\ra}\\
\rb_1 &      & \rc_2 & \\
\rd_1 & \rd_2 & &\\
 & & & 
\end{bmatrix}
\quad
\textup{(3)} \; \;
\begin{bmatrix}
\rc_2 & \rd_1 & \rc_1 & \ra\\
 &      &  & \\
\rb_2 & \rd_2 & &\\
\rb_1 & & & 
\end{bmatrix}
\\
 \\
\textup{(4)} \; \;
\begin{bmatrix}
\rd_2 & \rd_1 & \rc_1 & \ra\\
\rb_2 &     & \rc_2 & \\
    &     & &\\
\rb_1 & & & 
\end{bmatrix}
\quad \textup{(5)} \; \;
\begin{bmatrix}
\rd_2 &  \phantom{\rd_2}  & \rc_1 & \ra\\
\rb_2 &     & \rc_2 & \\
\rd_1 &     & &\\
\rb_1 &     & & 
\end{bmatrix}
\quad \textup{(6)} \; \;
\begin{bmatrix}
\rc_2 & \rd_1 & \phantom{\rc_1} & \ra\\
\rc_1 &      &  & \\
\rb_2 & \rd_2 & &\\
\rb_1 & & & 
\end{bmatrix} 
\end{array}
\]
}
\caption{To show that the rectangles $R_1,\dots,R_7$ are valid in the sense of uniform-covering, we need to be able to associate to each nonzero disjoint pair of $M$ a rectangle $R_i$ which is nonzero on this entry; furthermore each rectangle can only be used once (the mapping has to be one-to-one). The figure above shows how to construct this mapping for the 6 possible sparsity patterns of matrices in $\cA_{\S^2_+}(2)$.
For example, for the first sparsity pattern we associate the $M_{00,00}$ entry to rectangle $\rb_2$, the $M_{00,01}$ entry to rectangle $\rd_2$, etc. (here $\rb_1$ refers to the first copy of rectangle $\rb$, and $\rb_2$ refers to the second copy of rectangle $\rb$). The disjoint entries that are blank need not be associated to any rectangle because they are equal to zero (cf. the sparsity patterns of lemma \ref{lem:sparsityell=2}).}
\label{fig:map_phi_ell=2}
\end{figure}

\subsection{A uniform-covering for the general case}
\label{sec:uniformcovering-general}

In this section we treat the general case where $d$ is abitrary and we prove the following theorem:
\begin{thm} 
\label{thm:main-rectangles}
For any $d \geq 1$, $\cA_{\S^d_+}(d)$ has a $k$-uniform-covering with $k=3^d-1 < 3^d$.
\end{thm}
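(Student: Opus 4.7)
The plan is to mirror the two-step proof structure used for the $d=2$ case. First, I would establish a structural sparsity lemma (Lemma \ref{lem:sparsity}) that controls the disjoint-pair sparsity patterns of matrices $M \in \cA_{\S^d_+}(d)$. Second, I would use this lemma to exhibit $k = 3^d - 1$ explicit rectangles supported on disjoint pairs, together with, for each sparsity pattern, a one-to-one map from the nonzero disjoint-pair entries of $M$ into these rectangles.

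For the sparsity lemma I would fix a PSD factorization $M_{a,b} = \langle U_a, V_b \rangle$ with $U_a, V_b \in \S^d_+$ and analyze the image subspaces $\cU_a = \Im(U_a)$ and $\cV_b = \Im(V_b) \subseteq \RR^d$. The default-zero condition $M_{a,b}=0$ whenever $a^T b=1$ is equivalent to $U_a V_b = 0$, i.e.\ $\cV_b \subseteq \cU_a^\perp$. Combining these orthogonality relations with the ambient dimension bound $\dim \cU_a, \dim \cV_b \leq d$ should force additional disjoint-pair entries to vanish. The minimum deliverable is the bound $\val(M) \leq 3^d-1$, namely that at least one disjoint entry of every $M \in \cA_{\S^d_+}(d)$ is forced to zero; a refined case analysis, splitting according to whether certain subspaces $\cU_a$ coincide, span all of $\RR^d$, or are proper, should then produce a family of sparsity patterns amenable to a uniform rectangle covering, generalizing the six patterns found in Lemma \ref{lem:sparsityell=2}.

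For the rectangle construction I would use induction on $d$, with base case $d=1$ supplied by Example \ref{ex:uniformcovering} (note that $3^1-1=2$). In the inductive step, block-decompose
\[ M = \begin{bmatrix} M^{0,0} & M^{0,1} \\ M^{1,0} & M^{1,1} \end{bmatrix} \]
into blocks of size $2^{d-1}\times 2^{d-1}$; since strings starting with $1$ can never be disjoint from one another, all disjoint pairs of $M$ live in the three blocks $M^{0,0}, M^{0,1}, M^{1,0}$, for a total of $3 \cdot 3^{d-1}=3^d$ pairs. Applying the inductive hypothesis to the three blocks, suitably lifted back to the ambient $\{0,1\}^d\times\{0,1\}^d$ grid, would yield $3(3^{d-1}-1) = 3^d-3$ rectangles; the remaining two-rectangle gap would be bridged by two global rectangles supported on the first row and on the first column (playing the role of $\ra$ and $\rb$ of Section \ref{sec:uniformcovering-d=2}), whose assignment in each sparsity pattern is dictated by Lemma \ref{lem:sparsity}.

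The principal obstacle is a PSD-rank mismatch: each block $M^{x,y}$ only inherits the bound $\rankpsd \leq d$ from the factorization of $M$, not $\rankpsd \leq d-1$, so the inductive hypothesis on $\cA_{\S^{d-1}_+}(d-1)$ does not apply to the blocks directly. To close this gap I would use the orthogonality constraints $\cV_b \subseteq \cU_a^\perp$ coming from row-indices $a$ with a fixed first bit in order to peel off a one-dimensional subspace of $\RR^d$ common to all $\cU_{0 \concat a'}$, effectively reducing the PSD rank of $M^{0,0}$ by one; analogous arguments would be applied to $M^{0,1}$ and $M^{1,0}$. Proving the sparsity lemma itself, and threading its case analysis through the rectangle construction so that the covering map works uniformly across all $M \in \cA_{\S^d_+}(d)$, is expected to be the technically most demanding step.
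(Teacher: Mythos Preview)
Your identification of the obstacle is exactly right: the blocks $M^{0,0}, M^{0,1}, M^{1,0}$ inherit only $\rankpsd\le d$, not $\le d-1$, so the inductive hypothesis on $\cA_{\S^{d-1}_+}(d-1)$ does not apply to them. But your proposed fix---peeling off a common one-dimensional subspace from the $\cU_{0\concat a'}$'s to force the psd rank of each block down by one---does not work. Nothing in the default-zero constraints forces the subspaces $\cU_{0\concat a'}$ to share a direction (they may all be full-rank), and the off-diagonal blocks $M^{0,1}, M^{1,0}$ carry no orthogonality constraints beyond the default zeros, so there is simply nothing to peel there. Your rectangle count $3(3^{d-1}-1)+2=3^d-1$ thus rests on three invocations of a hypothesis that is unavailable for at least two of the three blocks.

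The paper sidesteps this obstacle by decoupling the algebra from the combinatorics. The only algebraic input is a sparsity lemma far weaker than the full case analysis you propose: every $M\in\cA_{\S^d_+}(d)$ has at least one zero on the \emph{antidiagonal}, i.e.\ $M_{a,\bar a}=0$ for some $a$ (Lemma~\ref{lem:sparsity}). The induction is then carried out on the purely combinatorial class $\C(d)$ of matrices with the default zeros plus one antidiagonal zero, so no psd-rank bookkeeping is needed inside the induction. In the inductive step there is \emph{no} recursion on the off-diagonal blocks at all: each disjoint pair in $M^{0,1}$ and $M^{1,0}$ is covered by its own two-element rectangle $A_{xy}=\{0\concat x\}\times\{0\concat y,1\concat y\}$ or $B_{xy}=\{0\concat x,1\concat x\}\times\{0\concat y\}$, accounting for $2\cdot 3^{d-1}$ rectangles. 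The guaranteed antidiagonal zero of $M$ (say at $(a,\bar a)$ in the top-right block) leaves exactly one of these rectangles unused; that freed rectangle is reassigned to cover one antidiagonal entry of the top-left block, which may now be treated as a member of $\C(d-1)$, and the $k_{d-1}$ rectangles from the inductive hypothesis finish it off. The recursion $k_d=2\cdot 3^{d-1}+k_{d-1}$ with $k_1=2$ gives $k_d=3^d-1$.
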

We need the following lemma whose proof is in Appendix \ref{sec:prooflemmasparsity}:
\begin{lem}
\label{lem:sparsity}
If $M \in \cA_{\S^{d}_+}(d)$ then $M$ has at least one zero entry on the antidiagonal, i.e., there exists $a \in \{0,1\}^{d}$ such that $M_{a,\bar{a}} = 0$ (where $\bar{a} \in \{0,1\}^{d}$ is the bitwise complement of $a$).
\end{lem}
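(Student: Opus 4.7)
The plan is to argue by contradiction: suppose every antidiagonal entry is strictly positive, $M_{a,\bar a}>0$ for all $a \in \{0,1\}^d$, and show that this already forces a whole row of $M$ to vanish, contradicting the hypothesis at that row. Write the factorization as $M_{a,b} = \langle U_a, V_b\rangle$ with $U_a, V_b \in \S^d_+$, and set $\cU_a = \Im(U_a)$, $\cV_b = \Im(V_b)$, both regarded as subspaces of $\RR^d$. The only fact about the psd cone I will use is the standard dictionary that for $U, V \in \S^d_+$ one has $\langle U, V\rangle = 0$ if and only if $\cU \perp \cV$. Thus the vanishing pattern ``$M_{a,b}=0$ whenever $a^T b = 1$'' becomes the orthogonality relation $\cU_a \perp \cV_b$, and conversely $M_{a,b} > 0$ means $\cU_a \not\perp \cV_b$.

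Focus on the singleton indices. Set $A_i := \cU_{e_i}$ and $C_j := \cV_{\bar{e_j}}$ for $i,j = 1,\dots,d$. For $i\neq j$ one checks $e_i^T \bar{e_j} = 1$, so $A_i \perp C_j$; equivalently, $C_j \subseteq (\sum_{i\neq j} A_i)^\perp$. The antidiagonal hypothesis at $a=e_j$ gives $M_{e_j,\bar{e_j}} > 0$, i.e.\ $A_j \not\perp C_j$, and combined with the previous inclusion this forces $A_j \not\subseteq \sum_{i\neq j} A_i$. Pick $v_j \in A_j \setminus \sum_{i\neq j} A_i$ for each $j$; a short linear-independence check shows that $v_1,\dots,v_d$ are linearly independent, hence form a basis of $\RR^d$, so $\sum_{j=1}^d A_j = \RR^d$. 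The same argument with rows and columns swapped (using the antidiagonal pairs $(\bar{e_i}, e_i)$ together with the subspaces $B_j := \cV_{e_j}$) yields $\sum_{j=1}^d B_j = \RR^d$.

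To conclude, apply the zero constraint to the pairs $(\mathbf{1}, e_j)$ for $j=1,\dots,d$: since $\mathbf{1}^T e_j = 1$, we get $\cU_{\mathbf{1}} \perp B_j$ for every $j$, and combined with $\sum_j B_j = \RR^d$ this forces $\cU_{\mathbf{1}} = \{0\}$, hence $U_{\mathbf{1}} = 0$ and the entire row of $M$ indexed by $\mathbf{1}$ vanishes. But $(\mathbf{1}, \mathbf{0})$ is itself an antidiagonal pair, contradicting $M_{\mathbf{1},\mathbf{0}} > 0$. The key conceptual point is that the $d$ singleton indices $e_1,\dots,e_d$ produce exactly $d$ subspaces $A_i$ which mutually fail to lie in each other's sum, and since they live in an ambient space of the same dimension $d$ they are forced to span all of $\RR^d$; this matching between the cone size and the number of weight-one bit strings is precisely what pins the statement to the regime $\cA_{\S^d_+}(d)$ and is the main step I expect to double-check carefully, together with the psd orthogonality dictionary.
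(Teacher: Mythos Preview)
Your proof is correct, and it shares the paper's core mechanism: both arguments use the subspaces $\Im V_{e_j}$ together with the observation that $U_{\mathbf{1}}\perp V_{e_j}$ for every $j$, so that once $\sum_j \Im V_{e_j}=\RR^d$ the row indexed by $\mathbf{1}$ vanishes and $(\mathbf 1,\mathbf 0)$ is an antidiagonal zero. The organization differs. The paper argues by a direct dichotomy: either $\sum_j \Im V_{e_j}=\RR^d$ and one finishes as above, or the chain $F_p=\Im V_{e_1}+\cdots+\Im V_{e_p}$ stagnates at some $p$, in which case one checks directly that $M_{\overline{e_{p+1}},\,e_{p+1}}=0$. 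You instead assume all antidiagonal entries are positive and use the extra hypotheses at the pairs $(\overline{e_j},e_j)$ to force $B_j\not\subseteq\sum_{i\neq j}B_i$ for every $j$, hence $\sum_j B_j=\RR^d$ via a linear-independence argument with chosen vectors $v_j$; this is essentially the contrapositive of the paper's second case, established by a different device. The paper's route is a bit shorter and more constructive (it names the antidiagonal zero explicitly in each case), while yours is clean and self-contained once the independence of the $v_j$ is verified. One minor point: your conclusion $\sum_j A_j=\RR^d$ with $A_j=\Im U_{e_j}$ is never used downstream---only the $B_j$ version feeds into the final step---so you could run the independence argument once, directly for the $B_j$'s.
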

\begin{rem}
One can verify in the special case $d=2$ treated in Lemma \ref{lem:sparsityell=2} that there is indeed one zero on the antidiagonal in all 6 sparsity patterns.
\end{rem}

Define $\C(d)$ to be the set of $2^d \times 2^d$ matrices $M$ such that $M_{a,b} = 0$ whenever $a^T b = 1$ and such that $M$ has at least one zero on the antidiagonal, i.e.,
\begin{equation*}
 \C(d) = \Bigl\{ M \in \RR^{2^d \times 2^d}_+ \; : \; M_{\alpha,\bar{\alpha}} = 0 \; \text{ for some $\alpha \in \{0,1\}^d$} \; \text{ and } M_{a,b} = 0 \; \text{ whenever } a^T b = 1 \;   \Bigr\}.
\end{equation*}
In this section we will show that $\C(d)$ has a $k$-uniform-covering with $k = 3^{d}-1$. This will prove Theorem \ref{thm:main-rectangles} since by the previous lemma, $\cA_{\S^{d}_+}(d) \subseteq \C(d)$. 

The proof of the theorem is by induction on $d$. It is easy to see that $\C(1)$ is true for $d=1$ (with $k = 3^1 - 1 = 2$); indeed this is similar to Example \ref{ex:uniformcovering} from Section \ref{sec:induction}. The following lemma shows how to construct rectangles for the $d$'th level given rectangles for the $d-1$'st level.
\begin{lem}
\label{lem:rectangles_Sd}
Assume $\C(d-1)$ has a $k_{d-1}$-uniform-covering with rectangles  $R_1,\dots,R_{k_{d-1}}$. Then $\C(d)$ has a $k_d$-uniform-covering with $k_d = k_{d-1} + 2\cdot 3^{d-1}$.
\end{lem}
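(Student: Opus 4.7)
\textbf{Proof plan for Lemma~\ref{lem:rectangles_Sd}.}
The plan is to construct the $k_d = k_{d-1} + 2\cdot 3^{d-1}$ rectangles explicitly, and then to exhibit an injective mapping $\phi$ for every $M\in\C(d)$. I will block-decompose each $M\in\C(d)$ into four $2^{d-1}\times 2^{d-1}$ blocks $M^{x,y}$, $(x,y)\in\{0,1\}^2$, according to the first bit of the row/column index, just as in Lemma~\ref{lem:main-induction}. The block $M^{1,1}$ contains no disjoint pairs (both first bits are $1$), so all nonzero disjoint entries of $M$ lie in $M^{0,0}$, $M^{0,1}$, $M^{1,0}$. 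Moreover, the forced antidiagonal zero $M[\alpha,\bar\alpha]=0$ guaranteed by membership in $\C(d)$ sits in $M^{0,1}$ if the first bit of $\alpha$ is $0$, and in $M^{1,0}$ otherwise; after stripping the first bit, this shows that at least one of $M^{0,1}$ and $M^{1,0}$ belongs to $\C(d-1)$.

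For the rectangle set, for each disjoint pair $(a,b)\in\{0,1\}^{d-1}\times\{0,1\}^{d-1}$ I define $A_{(a,b)} = \{0\cdot a\}\times\{0\cdot b,\,1\cdot b\}$ (which spans $M^{0,0}$ and $M^{0,1}$, hitting exactly one entry in each block) and $B_{(a,b)} = \{0\cdot a,\,1\cdot a\}\times\{0\cdot b\}$ (spanning $M^{0,0}$ and $M^{1,0}$). This gives $2\cdot 3^{d-1}$ rectangles. For each rectangle $R'_i = I'_i\times J'_i$ from the assumed $\C(d-1)$-covering, I add the lifted rectangle $\tilde R_i = (\{0\}\times I'_i)\times(\{0\}\times J'_i)$, which lives entirely in the $M^{0,0}$ block and hits exactly the pairs $(0\cdot a,0\cdot b)$ with $(a,b)\in R'_i$. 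This adds $k_{d-1}$ rectangles for a total of $k_d$, and from the identity $(x\cdot a)^T(y\cdot b) = xy + a^T b$ each one is immediately verified to be supported on disjoint pairs of $\{0,1\}^d$.

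To define $\phi$, suppose $M^{0,1}\in\C(d-1)$ (the case $M^{1,0}\in\C(d-1)$ is symmetric, with the roles of $A$ and $B$ swapped). Let $\psi$ be the injective map from the nonzero disjoint entries of $M^{0,1}$ into $\{1,\dots,k_{d-1}\}$ supplied by the $\C(d-1)$ covering. For each disjoint pair $(a,b)\in\{0,1\}^{d-1}\times\{0,1\}^{d-1}$ I assign: $A_{(a,b)}\leftarrow M^{0,1}[a,b]$ when this entry is positive; $B_{(a,b)}\leftarrow M^{1,0}[a,b]$ when this entry is positive; and for $M^{0,0}[a,b]$ positive, I use $A_{(a,b)}$ if it is still free, otherwise $B_{(a,b)}$ if free, and otherwise (the ``triple-nonzero'' case, where $M^{0,0}[a,b],M^{0,1}[a,b],M^{1,0}[a,b]$ are \emph{all} positive) I use the fallback rectangle $\tilde R_{\psi(a,b)}$, which covers $(0\cdot a,0\cdot b)$ because $(a,b)\in R'_{\psi(a,b)}$.

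Checking that this works reduces to two points. First, every assignment maps an entry into a rectangle that actually covers it, which is immediate from the construction. Second, the map is injective: the $A_{(a,b)}$ and $B_{(a,b)}$ are indexed by $(a,b)$ and so are never re-used across different pairs, while the fallback assignments use distinct $\tilde R_i$ because the triple-nonzero entries of $M^{0,0}$ form a subset of the nonzero entries of $M^{0,1}$, on which $\psi$ is injective. The main subtlety to track is the counting: one must check that the number of triple-nonzero pairs never exceeds the number $k_{d-1}$ of available $\tilde R$ rectangles, and this is exactly where the inductive hypothesis is used --- every triple-nonzero pair $(a,b)$ has $M^{0,1}[a,b]>0$, and $M^{0,1}\in\C(d-1)$ has at most $k_{d-1}$ nonzero disjoint entries by the $k_{d-1}$-uniform-covering assumption (which in particular implies $\val(M^{0,1})\leq k_{d-1}$).
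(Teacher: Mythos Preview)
Your proof is correct, and your rectangle construction (the $A_{(a,b)}$, $B_{(a,b)}$, and lifted $\tilde R_i$) is exactly the one the paper uses. The only difference is in how the injective map $\phi$ is built on the top-left block: the paper uses the single freed rectangle $A_{a'\overline{a'}}$ (coming from the antidiagonal zero in $M^{0,1}$) to cover the entry $(0\concat a',0\concat\overline{a'})$ of $M^{0,0}$, thereby placing $M^{0,0}$ itself (with that entry regarded as zero) in $\C(d-1)$, and then applies the induction hypothesis to $M^{0,0}$ via the $\tilde R_i$'s. You instead apply the induction hypothesis directly to $M^{0,1}\in\C(d-1)$ and use the resulting injection $\psi$ only on the ``triple-nonzero'' pairs. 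Both routes are valid; yours is arguably a touch cleaner since $M^{0,1}$ is already in $\C(d-1)$ without any modification, while the paper's route is slightly more uniform in that the $\tilde R_i$'s cover essentially all of $M^{0,0}$ rather than a sporadic subset. One small remark: your closing ``counting'' check is redundant---injectivity of $\psi$ on the triple-nonzero pairs already guarantees you never run out of $\tilde R_i$'s, so no separate bound on $\val(M^{0,1})$ is needed.
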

Note that if we solve the recursion $k_d = k_{d-1} + 2\cdot 3^{d-1}$ with the initial value $k_1 = 2$ we get the desired $k_d = 3^d-1$.
\begin{proof}[Proof of lemma \ref{lem:rectangles_Sd}]
Let $R_1,\dots,R_{k_{d-1}}$ be the $k_{d-1}$ rectangles from the assumption.
We are going to define $k_d = k_{d-1} + 2\cdot 3^{d-1}$ new rectangles for level $d$ as follows. Define for each $x,y \in (\{0,1\}^{d-1})^2$ such that $x^T y = 0$ the following rectangles in $ \{0,1\}^d \times \{0,1\}^d$:
\[ A_{xy} = \{0 \concat x\} \times \{0\concat y,1\concat y\}, \]
\[ B_{xy} = \{0\concat x, 1\concat x\} \times \{0\concat y\}, \]
and for $i=1,\dots,k_{d-1}$ define
\[ C_i = \{ (0\concat x, 0\concat y) : (x,y) \in R_{i} \}.\]
\begin{figure}[ht]
  \centering
  \includegraphics[width=12cm]{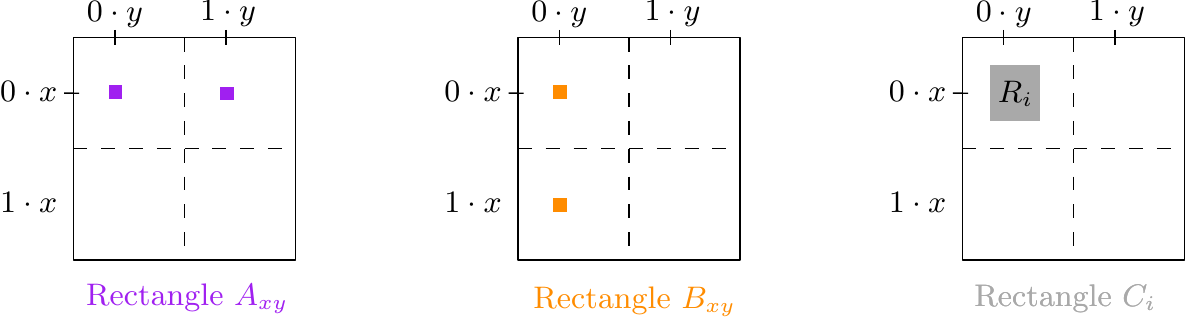}
  \caption{Illustration of the rectangles $A_{xy}$, $B_{xy}$ and $C_i$}
  \label{fig:figure_definition_rectangles}
\end{figure}
 This defines $2 \cdot 3^{d-1} + k_{d-1}$ rectangles which are supported on the disjoint pairs of $\{0,1\}^d$ (see Figure \ref{fig:figure_definition_rectangles} for an illustration of these rectangles---note for example that the number of elements in the rectangles $A_{xy}$ and $B_{xy}$ is equal to two). We claim that these rectangles are valid (in the sense of the uniform-covering property).
Indeed, let $M \in \C(d)$. Let $(a,\bar{a})$ be the element on the antidiagonal for which $M_{a,\bar{a}} = 0$.  We are going to map the nonzero disjoint pairs of $M$ to the rectangles defined above as follows: We proceed in two steps where in the first step we deal with entries that lie either in the top-right or bottom-left block of the matrix, and in the second step we deal with entries in the top-left block:
\begin{enumerate}
\item Let $(x,y)$ be a disjoint pair in $\{0,1\}^d \times \{0,1\}^d$ such that $M_{x,y} > 0$:
\begin{itemize}
\item If $(x,y)$ lies in the top-right block (i.e., if $y_1 = 1$) we use the appropriate rectangle $A_{xy}$.
\item If $(x,y)$ lies in the bottom-left block (i.e., if $x_1 = 1$) we use the appropriate rectangle $B_{xy}$.
\end{itemize}
\item Now it remains to map the nonzero pairs in the top-left block. For this we proceed as follows: Recall that $a \in \{0,1\}^d$ is such that $M[a,\bar{a}] = 0$. We will assume without loss of generality that $(a,\bar{a})$ lies in the top-right block, i.e., $a_1 = 0$ (cf. Figure \ref{fig:figure_step2_rectangles}).
\begin{figure}[ht]
  \centering
  \includegraphics[width=4cm]{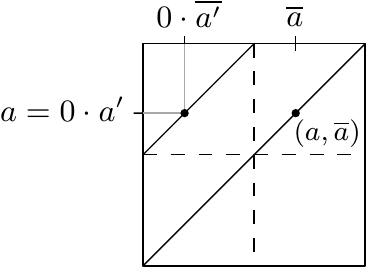}
  \caption{Illustration of step 2 in proof of Lemma \ref{lem:rectangles_Sd}}
  \label{fig:figure_step2_rectangles}
\end{figure}
Let $a'=a_{2\dots d}$. Consider the antidiagonal of the top-left block and consider the element $(0\concat a',0\concat \overline{a'})$ which lies on this antidiagonal. Since $M_{a,\bar{a}} = 0$ the rectangle $A_{a'\overline{a'}}$ was not used in step 1 above and so we can use it to map the entry $(0\concat a',0\concat \overline{a'})$ if it is $>0$. We can now effectively think of this entry as being a zero and we can thus think of the top-left block as a $2^{d-1} \times 2^{d-1}$ matrix in $\C(d-1)$. Thus by the induction hypothesis, we can map all its nonzero disjoint pairs with $k_{d-1}$ rectangles which are the $C_i$'s.
Thus this terminates the proof of the lemma.
\end{enumerate}
\end{proof}

\paragraph{Putting things together}

If we combine Theorem \ref{thm:main-rectangles} with Theorem \ref{cor:exponential-upper-bound}, we get the following result:
\begin{cor}
Let $d \geq 1$ be fixed. For any $n \geq d$ and any $M \in \cA_{\S^d_+}(n)$, it holds $\val(M) \leq (3^d-1)^{\lfloor (n-1)/d \rfloor + 1}$.
\end{cor}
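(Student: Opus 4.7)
The plan is to obtain this corollary as an immediate consequence of the two main technical results developed earlier in the paper. First, I would invoke Theorem \ref{thm:main-rectangles}, which asserts that for the fixed $d \geq 1$ under consideration, the set $\cA_{\S^d_+}(d)$ of $2^d \times 2^d$ ``atoms'' admits a $k$-uniform-covering with $k = 3^d - 1$. This supplies exactly the hypothesis needed to apply the general induction machinery from Section \ref{sec:induction}.

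Next, I would apply Theorem \ref{cor:exponential-upper-bound} with the convex cone $K = \S^d_+$ and the base case dimension $d$. That theorem takes as input a $k$-uniform-covering of $\cA_K(d)$ for some integer $d$, and outputs, for every $n \geq d$ and every $M \in \cA_K(n)$, the bound $\val(M) \leq k^{\lfloor (n-1)/d\rfloor + 1}$. Substituting $K = \S^d_+$ and $k = 3^d - 1$ into this conclusion yields precisely
\[
\val(M) \leq (3^d - 1)^{\lfloor (n-1)/d \rfloor + 1}
\]
for all $n \geq d$ and all $M \in \cA_{\S^d_+}(n)$, which is the desired inequality.

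There is essentially no obstacle here since the statement is packaged as a direct combination of the two preceding results: all the real work has already been done in proving Theorem \ref{thm:main-rectangles} (constructing the explicit rectangles level by level using Lemma \ref{lem:rectangles_Sd}, with Lemma \ref{lem:sparsity} ensuring a zero on the antidiagonal) and in establishing the induction lemma behind Theorem \ref{cor:exponential-upper-bound}. The only thing to verify is that the parameters match, namely that the ``base case'' $d$ appearing in Theorem \ref{cor:exponential-upper-bound} is the same integer as the size parameter of the semidefinite cone $\S^d_+$, which is exactly the convention adopted in the paper.
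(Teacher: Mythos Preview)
Your proposal is correct and matches the paper's own treatment exactly: the corollary is stated immediately after the sentence ``If we combine Theorem \ref{thm:main-rectangles} with Theorem \ref{cor:exponential-upper-bound}, we get the following result,'' and no further proof is given. Your identification of the two ingredients and the substitution $K = \S^d_+$, $k = 3^d - 1$ is precisely what the paper intends.
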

\noindent Thus, using this corollary and the fact that $\val(\UDISJ(n)) = 3^n$, we have that for any any fixed $d$, and any $n\geq d$:
\begin{equation}
\label{eq:lowerbound-generalanalysis-d=2}
\rank_{\S^d_+}(\UDISJ(n)) \geq \frac{3^n}{(3^d-1)^{\lfloor (n-1)/d \rfloor + 1}} \geq \frac{1}{(3^d-1)^{1-1/d}} \left((1-3^{-d})^{-1/d}\right)^n
\end{equation}
which is the lower bound stated in Theorem \ref{thm:main}.

\begin{rem}
Note that for the case $d=2$ the result above gives the lower bound:
\[ \rank_{\S^2_+}(\UDISJ(n)) \geq \frac{1}{\sqrt{8}} \left(\sqrt{\frac{9}{8}}\right)^n. \]
which is slightly weaker than the lower bound we obtained in the previous section by a more refined analysis of the case $d=2$.
\end{rem}

\paragraph{Acknowledgments} Hamza Fawzi would like to thank Omar Fawzi for useful comments on the manuscript.

\appendix

\section{Proof of Lemma \ref{lem:sparsity}}
\label{sec:prooflemmasparsity}

Let $M_{a,b} = \langle U_a, V_b \rangle$ be a psd-factorization of $M$, where $U_a, V_b \in \S^{d}_+$. Let $\1 = 1\dots 1 \in \{0,1\}^{d}$ be the all-ones bit string, and let $e_i \in \{0,1\}^{d}$ be the bit string with a 1 in $i$'th position, and $0$'s elsewhere. Note that by our assumption we have $M_{\1,e_i} = \langle U_{\1}, V_{e_i} \rangle = 0$ for all $i \in \{1,\dots,d\}$ since $\1^T e_i = 1$. Hence this means that $\Im V_{e_i} \subseteq \Ker U_{\1}$ for all $i \in \{1,\dots,d\}$, and thus
\[ \Im V_{e_1} + \dots + \Im V_{e_{d}} \subseteq \Ker U_{\1}. \]
We now distinguish two cases:
\begin{itemize}
\item If $\Im V_{e_1} + \dots + \Im V_{e_{d}} = \RR^{d}$ we are done since then $\Ker U_{\1} = \RR^{d}$, i.e., $U_{\1} = 0$ which implies that $M_{a,\bar{a}} = 0$ with $a = \1$ (in fact the whole row $M_{\1,\cdot}$ is zero).
\item Otherwise assume $\Im V_{e_1} + \dots + \Im V_{e_{d}} \subsetneq \RR^{d}$. If $\Im V_{e_1} = \{0\}$, i.e., $V_{e_1} = 0$ then the whole column $M_{\cdot,e_1}$ is zero and so the claim is true. If $\Im V_{e_1} \neq \{0\}$ one can show that we have necessarily 
\[ \Im V_{e_{p+1}} \subseteq \Im V_{e_1} + \dots + \Im V_{e_p} \]
for some $p$ (indeed, one can consider the nondecreasing sequence of subspaces $F_i = \Im V_{e_1} + \dots + \Im V_{e_i}$ and observe that there must there exist $p$ such that $F_p =  F_{p+1}$).
Define $a = \overline{e_{p+1}}$. We claim that $M_{a,\bar{a}} = 0$. In fact, note that $a^T e_i = 1$ for any $i \neq p$, and thus $M_{a,e_i} = \langle U_a, V_{e_i} \rangle = 0$ for all $i \neq p$. This means that $\Im V_{e_i} \subseteq \Ker U_{a}$ for all $i\neq p$ and thus
\[ \Im V_{e_1} + \dots + \Im V_{e_p} \subseteq \Ker U_a \]
Hence we get $\Im V_{e_{p+1}} \subseteq \Ker U_a$, which means $M_{a,e_{p+1}} = 0$. Since, by definition, $e_{p+1} = \bar{a}$, this terminates the proof.
\end{itemize}

\bibliographystyle{alpha}
\bibliography{../../../bib/nonnegative_rank}

\end{document}